\documentclass{amsart}
\usepackage{amsthm, amscd, amsmath}
\usepackage{amscd}
\usepackage[margin=1.5in]{geometry}
\usepackage{amsmath}
\usepackage{psfrag, graphicx}
\usepackage{pstricks}
\usepackage{epic,eepic}
\usepackage{color}
\usepackage{pstricks-add}
\usepackage{hyperref}
\usepackage{enumitem}

\usepackage{changes}

\usepackage{comment}


\newtheorem{thm}{Theorem}[section]
\newtheorem{definition}[thm]{Definition}
\newtheorem{remark}[thm]{Remark}

\newtheorem{prop}[thm]{Proposition}

\newtheorem{thmx}{Theorem}

\numberwithin{equation}{section}


\def\diag{\mathop{\hbox{\rm diag}}}

\def\id{\mathop{\hbox{\rm id}}}

\def\Diff{\mathrm{Diff}}

\def\dim{\mathop{\hbox{\rm dim}}}
\def\det{\mathop{\hbox{\rm det}}}

\def\ve{\varepsilon}

\def\NN{{\mathbb N}}

\def\RR{{\mathbb R}}

\def\TT{{\mathbb T}}

\def\D2{{\mathbb D}^2}

\def\cD{\mathcal D}
\def\cE{\mathcal E}

\def\cG{\mathcal G}
\def\cH{\mathcal H}

\def\cK{\mathcal K}
\def\cL{\mathcal L}
\def\cM{\mathcal M}
\def\cN{\mathcal N}
\def\cO{\mathcal O}

\def\cQ{\mathcal Q}

\def\cS{\mathcal S}

\def\cU{\mathcal U}

\def\cS{{\mathcal S}}

\def\ds{\displaystyle}
\def\dist{\mathrm{dist}}

\def\beq{\begin{equation}}
\def\eeq{\end{equation}}
\def\beqn{\begin{equation*}}
\def\eeqn{\end{equation*}}

\def\p{\partial}
\def\Int{\mathrm{Int}}
\def\vol{\mathrm{vol}}

\def\wrho{\widetilde{\rho}}
\def\brho{\overline{\rho}}

\def\si{\sigma}

\def\wpsi{\widetilde{\psi}}
\def\wPsi{\widetilde{\Psi}}

\newtheorem{propertyx}{}

\begin{document}
\title[]{
A volume preserving nonuniformly hyperbolic diffeomorphism
with arbitrary number of ergodic components and close to the identity}
\author{Jianyu Chen, Huyi Hu and Yun Yang}

\address{School of Mathematical Sciences, 
Center for Dynamical Systems and Differential Equations, Soochow University, Suzhou, Jiangsu, P.R.China}
\email{jychen@suda.edu.cn}

\address{Department of Mathematics, Michigan State University,  
East Lansing, MI, 48824 United States; School of Mathematical Sciences, Center for Dynamical Systems and Differential Equations, Soochow University, Suzhou, Jiangsu, P.R.China}
\email{hhu@msu.edu; }

\address{Department of Mathematics, Virginia Polytechnic Institute and State University, Blacksburg, VA, 24060 United States}
\email{yunyang@vt.edu}


\begin{abstract}
We prove that  for  any $\ell\in\NN\cup\{\infty\}$ and any $r\in \NN$,
every compact smooth Riemannian manifold $\cM$ of 
$\dim \cM\ge 5$
carries a $C^\infty$ volume  preserving nonuniformly hyperbolic diffeomorphism,
which has exactly $\ell$ ergodic components  (in fact, Bernoulli components) and 
is $C^r$ close to the identity. 
\end{abstract}

\maketitle

\section{Introduction}

\subsection{Background and Main Result}

It is known that the existence of uniform hyperbolicity
yields certain topological restrictions,
for instance, the Euler characteristic 
must be zero for any manifold admitting an Anosov diffeomorphism.
Nevertheless, 
there are no topological obstructions for manifolds to admit
nonuniformly hyperbolic diffeomorphisms.  
The notion of nonuniform hyperbolicity 
was first brought up and thoroughly investigated by Pesin (see~\cite{BP2, BP, Todd-Hassel} for references).
A volume preserving diffeomorphism is said to be
\emph{(completely) nonuniformly hyperbolic}, if 
all of its Lyapunov exponents are nonzero at   almost every point.
On any compact surface, 
Katok constructed in \cite{K}  a Bernoulli  diffeomorphism
that is area preserving and nonuniformly hyperbolic. 
Based on this result,
Brin, Feldman and Katok further proved in \cite{BFK} that 
every compact manifold $\cM$ of  $\dim \cM=m \ge 2$  
carries a Bernoulli diffeomorphism, which has only $2$ non-zero Lyapunov exponents
and $(m-2)$ zero Lyapunov exponents;
meanwhile, Brin proved in \cite{B} that every compact manifold $\cM$ of  $\dim \cM=m \ge 5$  
carries a Bernoulli diffeomorphism with exactly $(m-1)$ non-zero Lyapunov exponents.
To demonstrate the existence of the 
complete nonuniform hyperbolicity in full generality,
Dolgopyat and Pesin later proved in \cite{DP} that 
any compact manifold $\cM$ of $\dim \cM\ge 2$ 
carries a volume preserving 
nonuniformly hyperbolic Bernoulli  diffeomorphism, 
and then a similar result was established 
by Hu, Pesin and Talitskaya in~\cite{HPT}
in the continuous-time case for
$\dim \cM\ge 3$.

The spectral decomposition theorem by Smale~\cite{Smale67} 
asserts that for any Axiom A diffeomorphism,
its non-wandering set  
is a union of finitely many basic sets.
In contrast, 
it was shown by Pesin in \cite{Pesin77} that 
there are at most countable, which might be 
infinitely many, ergodic components for 
a volume preserving nonuniformly hyperbolic diffeomorphism. 
Such example was first 
constructed on the 3-torus by Dolgopyat, Hu and Pesin~\cite{DHP},
in which the diffeomorphism is close to the direct product of an Anosov automorphism  and the identity. 
It was later conjectured by Hu~\cite{H} that 
every smooth compact   manifold $\cM$  
admits a volume preserving nonuniformly hyperbolic diffeomorphism,
which is  close to the identity and possesses countably infinitely many ergodic components.
In this paper, we provide an affirmative answer to
the conjecture given in \cite{H} 
for every compact manifold of dimension 
no less than $5$.

\begin{thmx}
\label{Thma}
Let $\cM$ be a  compact connected smooth Riemannian manifold (possibly with boundary)
of $\dim \cM\ge 5$.
For any $\ve>0$,  any $\ell \in \NN\cup\{\infty\}$ and any $r\in \NN$,
there exists a $C^\infty$ diffeomorphism $F$ of $\cM$
with the following properties:
\begin{enumerate} 
\item $F$ is volume preserving, i.e., 
$F$ preserves the Riemannian volume $\vol_\cM$; 
\item $F$ is nonuniformly hyperbolic, i.e., 
$F$ has non-zero Lyapunov exponents at $\vol_\cM$-almost every point $x\in \cM$; 
\item $F$ has exactly $\ell$ ergodic components 
which are open$\pmod {\vol_\cM}$. In fact, $F$ is Bernoulli on each component;
\item $\dist_{C^r}(F, \id)\le \ve$, i.e.,
$F$ is $\ve$-close to the identity in the $C^r$ topology.
\end{enumerate} 
\end{thmx}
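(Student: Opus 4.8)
\smallskip
\noindent\textit{Proof strategy.}
The plan is to isolate a single \emph{block} and then assemble copies of it on $\cM$. By a block I mean a $C^\infty$ volume preserving diffeomorphism $g$ of a compact model piece $P$ (an $n$-manifold with corners, $n=\dim\cM\ge5$) that equals the identity to infinite order along $\partial P$, satisfies $\dist_{C^r}(g,\id)\le\varepsilon$, is nonuniformly hyperbolic at $\vol$-a.e.\ point of $\Int P$, and is Bernoulli there (for the invariant measure $\vol|_P$, since $\vol(\partial P)=0$). Granting such blocks for all data, Theorem~\ref{Thma} follows by a gluing argument. For finite $\ell$: fix a Morse function $\cM\to[0,1]$ and cut $\cM$ along $\ell-1$ regular level sets $\Sigma_1,\dots,\Sigma_{\ell-1}$ into compact pieces $P_1,\dots,P_\ell$ of dimension $\ge5$ (some possibly meeting $\partial\cM$); after smoothing corners, install a block on each $P_i$ and set $F=\id$ on the union of the seams and of $\partial\cM$. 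Since each block is infinitely tangent to $\id$ along its boundary, $F$ is a $C^\infty$ diffeomorphism of $\cM$; the $P_i$ are $F$-invariant and $F$ is Bernoulli on each $\Int P_i$, so $F$ has exactly $\ell$ ergodic components, each open $\pmod{\vol}$; and $\dist_{C^r}(F,\id)\le\varepsilon$. For $\ell=\infty$ one replaces the finite chain by pairwise disjoint hypersurfaces $\Sigma_1,\Sigma_2,\dots$ accumulating onto a $\vol$-null invariant set $\Sigma_\infty$ (in the spirit of~\cite{DHP}), installs on the shell between $\Sigma_k$ and $\Sigma_{k+1}$ a block of $C^r$-size $\le 2^{-k}\varepsilon$, and sets $F=\id$ on $\Sigma_\infty$ and all seams; the countably many shells are the Bernoulli components, and $\vol\bigl(\Sigma_\infty\cup\bigcup_k\Sigma_k\bigr)=0$, so~(2) holds a.e.

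\smallskip
\noindent\textit{Construction of a block.}
I would first build $g$ on the concrete piece $P=\S^1\times\DD^{\,n-1}$ and then transport it to a general $P$ via a handle/Morse decomposition; the room needed for this transport and for the mechanism below (two hyperbolic directions, one ``loop'' direction, plus spare directions) is what forces $\dim\cM\ge5$. The device reconciling hyperbolicity with $C^r$-smallness is to realize a \emph{fixed} hyperbolic holonomy as a long composition of near-identity maps strung along the $\S^1$-factor. Choose a $C^\infty$ volume preserving diffeomorphism $A$ of $\DD^{\,n-1}$ that is hyperbolic in two fixed directions throughout the interior (degenerating only along $\partial\DD^{\,n-1}$, where it equals $\id$ to infinite order) and carries a Katok-type slow-down at a fixed point so that $DA=I$ there; write $A=a_N\circ\cdots\circ a_1$ with each $a_i\in\Diff^\infty_{\vol}(\DD^{\,n-1})$ fixing $\partial\DD^{\,n-1}$ and $C^r$-close to $\id$ (possible since $A$ is connected to $\id$ through such diffeomorphisms), and let $g$ rotate $\theta$ by $\rho(x)/N$ --- where $\rho>0$ on $\Int\DD^{\,n-1}$ and $\rho=0$ to infinite order on $\partial\DD^{\,n-1}$ --- while applying the factor $a_i$ to the fibre as $\theta$ crosses the $i$-th arc, with a Moser correction restoring $\vol$. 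Then $g$ is $C^r$-close to a $\theta$-rotation, hence to $\id$; it is $\id$ to infinite order along $\partial P$; orbits winding once around $\S^1$ accumulate the full hyperbolic action of $A$; and its only set of possible exponent-degeneracy is the $\vol$-null invariant union of $\partial P$ with the slow-down core, so every Lyapunov exponent is nonzero a.e., of size $O(\varepsilon)$. (The finitely or infinitely many seams of the assembly are absorbed into $\partial P$ of the pieces, so the same applies there.)

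\smallskip
\noindent\textit{Ergodicity and Bernoullicity of the block.}
This is the Pesin-theoretic part. Nonuniform hyperbolicity: exhibit an eventually strictly invariant measurable cone field carried by the hyperbolic $2$-plane, strictly contracted off the slow-down core; then the extremal Lyapunov exponents are nonzero $\vol$-a.e. Ergodicity on $\Int P$: $\vol$-a.e.\ point has Pesin local stable and unstable manifolds, whose laminations are absolutely continuous ($g$ being $C^\infty$); since the unstable leaves, the stable leaves and the $\S^1$-direction together generate, a Hopf-type argument as in~\cite{BP,DP} gives ergodicity, with the component open $\pmod{\vol}$. Bernoullicity: applying the same to all powers of $g$ yields the $K$-property, and for $C^{1+\alpha}$ volume preserving nonuniformly hyperbolic maps the $K$-property implies Bernoulli.

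\smallskip
\noindent\textit{The main obstacle.}
The crux is enforcing~(2) and~(4) at once. The condition $\dist_{C^r}(F,\id)\le\varepsilon$ forces $\|DF-I\|\le\varepsilon$ everywhere, so every Lyapunov exponent lies in $[-\log(1+\varepsilon),\log(1+\varepsilon)]$ and the hyperbolicity is invisible over any bounded number of iterates --- it must be \emph{accumulated} along very long orbit segments winding around the $\S^1$-direction --- while one must still guarantee that the degenerate set (slow-down cores, seams, $\Sigma_\infty$, $\partial\cM$) is $\vol$-null \emph{and} not approached by a positive-measure set of orbits, and that the cone estimates and the Pesin charts survive the smallness. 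Choosing $N$, the slow-down profiles, the function $\rho$, the shell thicknesses and the $C^r$-budgets so that all of this closes up --- uniformly over the infinitely many blocks when $\ell=\infty$ --- is the technical heart. A secondary, topological, obstacle is to spread the concrete model over an arbitrary $\cM^n$ with $n\ge5$ so that the pieces are genuinely invariant and glue $C^\infty$ as $\id$ along their common boundaries, for every $\ell\in\NN\cup\{\infty\}$.
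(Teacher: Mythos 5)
Your overall architecture (build a boundary-flat near-identity Bernoulli block and assemble copies) parallels the paper's reduction of Theorem~A to Theorem~B, but the block you propose has a genuine gap at the exact point where the paper does its main work. A skew product $g$ on $\S^1\times \DD^{n-1}$ of the form $g(\theta,x)=(\theta+\rho(x)/N,\ \text{fibre map})$ leaves the line field $\langle\partial/\partial\theta\rangle$ invariant with $dg$ acting there as the identity, so the Lyapunov exponent in the $\S^1$-direction is identically zero. Your claim that ``every Lyapunov exponent is nonzero a.e.'' because the only degeneracy is on $\partial P$ and the slow-down core is false: the central/loop direction is degenerate everywhere, not on a null set. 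This is precisely the obstruction the paper must overcome: the start-up map $\varphi^t$ (the time-$t$ map of the Hu--Pesin--Talitskaya flow) has zero exponent along the flow direction, and Section~3.3 introduces a separate mechanism --- a gentle perturbation $\phi_\sigma$, following Shub--Wilkinson and Dolgopyat--Hu--Pesin, twisting the unstable and central directions inside a small solid torus --- to make the average central exponent strictly positive (Proposition~\ref{prop: property of h}(4), estimate~\eqref{derivative 2'}). Nothing in your sketch plays this role, so your block is not nonuniformly hyperbolic and property~(2) fails.

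This omission also hides the true source of the dimension restriction. You attribute $\dim\ge 5$ to ``two hyperbolic directions, one loop direction, plus spare directions,'' but what is actually needed (Remark~\ref{rem: dim34}) is a \emph{smooth} two-dimensional $d\varphi^t$-invariant unstable-center bundle $\cE=\cE^u\oplus\cE^c$ on which the perturbation can be computed to second order in $\sigma$; a merely H\"older bundle would permit a $C^1$-small perturbation but not a $C^r$-small one for $r\ge 2$. Obtaining a smooth such bundle requires the extra torus factor from Brin's construction, hence $m\ge 5$. Two further soft spots: (i) the ergodicity argument is hand-waved --- the paper's accessibility proof (\ref{P4}, and Proposition~\ref{prop: gentle 0}) uses specific periodic orbits of the Katok map and the toral automorphism, and this must survive the perturbation, which requires the perturbation support to avoid those orbits; (ii) the Morse-function gluing would need every piece $P_i$ to carry a volume-preserving boundary-flat block, i.e.\ essentially to be almost-conjugate to the model piece, which is not automatic. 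The paper sidesteps this by using Katok's single almost-conjugacy $\Psi:\cQ^m\to\cM$ (Proposition~\ref{prop: Katok top}) and slicing the cube into rectangular boxes, a decomposition that is both topologically trivial and volume-preserving by construction.
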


We remark that the dimension restriction 
of Theorem~\ref{Thma} is due to
some limitations of our approach in dimension $2$ (see Remark~\ref{rem: dim2})
and dimension $3, 4$ (see Remark~\ref{rem: dim34}) .
We also emphasize that 
the case when $\ell=\infty$ gives a positive solution to 
the conjecture   in \cite{H}, that is,
the diffeomorphism $F$ has countably infinitely many
ergodic   components.

Another motivation of Theorem~\ref{Thma} is
to explore the prevalence of nonuniform hyperbolicity.
Pesin proposed the following conjecture in~\cite{BP, P, CP}:
\begin{center}
{\it 
$\cH\cap \Diff^{r}(\cM, \vol_\cM)$ is dense in $\Diff^{r}(\cM, \vol_\cM)$ under the $C^r$-topology,
}
\end{center}
where 
$\Diff^{r}(\cM, \vol_\cM)$ is the class of $C^r$ volume preserving diffeomorphisms of $\cM$
for some $r\ge 1$, 
and $\cH$ is the set of  diffeomorphisms of $\cM$ which are 
nonuniformly hyperbolic in a set of positive volume.
The necessity  of positive volume but not full volume 
is due to the volume preserving
KAM phenomenon  (see e.g. Cheng-Sun~\cite{Cheng-sun},
Herman~\cite{Herman} and Xia~\cite{Xia}),
which prevents the density of complete nonuniform hyperbolicity
near Diophantine integrable systems.
Although Pesin's conjecture is still widely open, 
partial answers were given in some peculiar situations, for instance,  
Liang and the third author~\cite{LY} 
proved the density of $\cH\cap \Diff^{r}(\cM, \vol_\cM)$ of $\Diff^{r}(\cM, \vol_\cM)$
under the $C^1$ topology.
Our result in Theorem~\ref{Thma} can be viewed as
a counterpart of Pesin's conjecture near the identity,
that is,  a non-ergodic completely
nonuniformly hyperbolic diffeomorphism arises 
in an arbitrarily small $C^r$ neighborhood of the identity,
exhibiting countably infinitely many ergodic components.

\subsection{Key Reduction and Strategy of Proof}

To construct a system with multiple ergodic components, 
it is more convenient to 
work on the $m$-dimensional cube $\cQ^m:=[-1, 1]^m$
rather than the Euclidean disk.
The key reduction in the proof of Theorem~\ref{Thma} 
is to make a construction for the particular case when
$\cM=\cQ^m$ and $\ell=1$, 
i.e., to construct 
a volume preserving nonuniformly hyperbolic Bernoulli diffeomorphism $f$ of $\cQ^m$
for $m\ge 5$. 
Besides the $C^r$-closeness to the identity,
$f$ is also required to be {\it flat} 
near the boundary $\p\cQ^m$ with respect to an a priori  given {\it admissible sequence}.
Such notion was introduced by Katok~\cite{K} for the Euclidean disk,
and we shall adapt  it to the  $m$-dimensional cube 
$\cQ^m$, with slight modifications for our purpose.
The precise definition is postponed to 
Definition~\ref{def flat}.

\begin{thmx}
\label{Thmb}
Let $m\ge 5$. 
For any $\ve>0$, any $r\in \NN$ and any admissible sequence $\rho$,
there exists a $C^\infty$ diffeomorphism $f$ of $\cQ^m$ 
with the following properties:
\begin{enumerate} 
\item  
$f$ is volume preserving, i.e.,
$f$ preserves the standard volume $\vol_{\cQ^m}$; 
\item  
$f$ is nonuniformly hyperbolic, i.e.,
$f$ has non-zero Lyapunov exponents at $\vol_{\cQ^m}$-almost every point $x\in \cQ^m$; 
\item $f$ is ergodic (in fact,  Bernoulli) with respect to $\vol_{\cQ^m}$;
\item 
\begin{enumerate}
\item[(i)] 
$\|f-\id\|_{C^r}\le \ve$, i.e.,
$f$ is $\ve$-close to 
the identity in the $C^r$ topology;
\item[(ii)]  $f\in \Diff^\infty_{\rho}(\cQ^m)$, i.e.,
$f$ is $\rho$-flat on $\cQ^m$ (see Definition~\ref{def flat}).
\end{enumerate}
\end{enumerate} 
\end{thmx}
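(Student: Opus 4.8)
\medskip
\noindent\emph{Proof proposal for Theorem~\ref{Thmb}.}
The plan is to manufacture the hyperbolicity of $f$ from a slowed-down linear hyperbolic automorphism, in the spirit of Katok~\cite{K}, and to obtain the $C^r$-closeness to the identity, the $\rho$-flatness near $\partial\cQ^m$, and the \emph{complete} (rather than partial) nonuniform hyperbolicity by a chain of controlled surgeries and $C^r$-small perturbations, using the extra room available when $m\ge5$. The first step would be a \emph{local model lemma}: for every $\delta>0$ there is a $C^\infty$ volume preserving diffeomorphism $g$ of $\cQ^m$, lying in $\Diff^\infty_\rho(\cQ^m)$ (hence $\rho$-flat near $\partial\cQ^m$; see Definition~\ref{def flat}) with $\|g-\id\|_{C^r}\le\delta$, which is Bernoulli and has, at $\vol_{\cQ^m}$-a.e.\ point, exactly two nonzero Lyapunov exponents and $m-2$ vanishing ones. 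To build it, start from a linear hyperbolic automorphism $L$ of $\TT^2$ with fixed point $0$, realize $L$ as the time-one map of the associated linear hyperbolic flow, multiply its generating vector field by a function $\psi(\|x\|)$ vanishing to infinite order at $0$, and let $\widetilde L$ be the time-one map of the resulting flow. By Katok's lemma $\widetilde L$ is $C^\infty$, area preserving, infinitely tangent to the identity at $0$, ergodic and Bernoulli, with nonzero Lyapunov exponents off the orbit of $0$. Excising and opening up a disc around $0$ transplants $\widetilde L$ to an area preserving diffeomorphism of $\D2$ that is $\rho$-flat near $\partial\D2$; taking its product with the identity on $\cQ^{m-2}$ and inserting it flatly into a tiny sub-cube of $\cQ^m$ produces $g$, and $\|g-\id\|_{C^r}$ is made small by shrinking the sub-cube and, more importantly, by using only a short time-$t_0$ piece of the flow, with $t_0$ small, in place of the full time-one map.

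The second step removes the $m-2$ zero exponents, and this is where $m\ge5$ is used. I would place finitely many rescaled copies of the local model on pairwise distinct coordinate $2$-planes, supported in disjoint sub-cubes, so that their composition is still volume preserving, $\rho$-flat and $C^r$-close to $\id$; on its own this composition still carries zero exponents, since the neutral directions of distinct blocks never interact and (for odd $m$) at least one coordinate is untouched. Following Brin~\cite{B} and Dolgopyat--Pesin~\cite{DP}, I would interleave these hyperbolic blocks with finitely many localized, volume preserving, $C^r$-small ``rotations'' that carry the neutral subbundle of one block into the unstable subbundle of another; when $m\ge5$ there is enough room --- at least two complete hyperbolic $2$-planes together with one spare direction --- for such a chain of rotations to render the derivative cocycle of the composed map $f$ irreducible along almost every orbit, forcing all $m$ Lyapunov exponents of $f$ to be nonzero $\vol_{\cQ^m}$-a.e. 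As a composition of finitely many $\rho$-flat volume preserving maps, each chosen $C^r$-close enough to the identity, $f$ is again volume preserving, $\rho$-flat and $C^r$-close to the identity.

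The last step is to upgrade ``all exponents nonzero'' to ``ergodic, in fact Bernoulli'' with a single component. For this I would run the Pesin theory argument of~\cite{K,BFK,DP}: on the Pesin set exhibit a pair of transverse laminations by stable and unstable manifolds with absolutely continuous holonomies, verify that $f$ is topologically transitive on the support of $\vol_{\cQ^m}$ (arranging the positions of the hyperbolic blocks so that their stable and unstable manifolds spread across $\cQ^m$), conclude ergodicity from Pesin's criterion, and pass to the Bernoulli property through the existence of a measurable $K$-partition.

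The step I expect to be the main obstacle is reconciling requirements (2), (3) and (4) of Theorem~\ref{Thmb} \emph{simultaneously}: $C^r$-smallness works directly against both complete hyperbolicity and mixing, because in the slow regions $f$ is $C^0$-close to the identity and its local exponents are tiny, so the slowdown functions $\psi$, the times $t_0$, the sizes and locations of the sub-cubes, and the rotation angles must be balanced against one another so that no positive-measure set is trapped in a region of near-vanishing hyperbolicity, the cocycle stays irreducible along a.e.\ orbit, and the global map remains transitive with exactly one ergodic component --- all while keeping $f\in\Diff^\infty_\rho(\cQ^m)$. Carrying out this balancing rigorously, and verifying that the low-dimensional failures it forces are genuine, is the heart of the argument. \qed
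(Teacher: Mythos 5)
Your proposal is a genuinely different route from the paper's, and unfortunately it runs into two obstacles that the paper was specifically designed to avoid.

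First, the architecture of placing finitely many rescaled local hyperbolic models on disjoint sub-cubes, composed with localized rotations, cannot simultaneously yield complete nonuniform hyperbolicity and a single ergodic component. If each local model $g_i$ and each coupling rotation $R_j$ is supported in a small sub-cube and equals the identity outside, the composition $f$ is the identity on the complement of a finite union of small sub-cubes; that complement has positive volume, consists of fixed points with all Lyapunov exponents zero, and splits into uncountably many trivial ergodic components, killing properties (2) and (3) at once. If instead you intend each model to act on a full-measure region (with the $C^r$-smallness coming only from the short time $t_0$), then after taking the product with the identity on $\cQ^{m-2}$ you have recovered essentially the Brin--Feldman--Katok / Dolgopyat--Pesin starting point, whose difficulty the paper explicitly flags: removing the $m-2$ zero exponents is the entire problem, and the ``rotations that carry the neutral subbundle into the unstable subbundle'' must be done along a bundle that is typically only H\"older continuous. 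Such a perturbation argument (Shub--Wilkinson / Dolgopyat--Pesin) does produce a nonzero central exponent under a $C^1$-small perturbation, but for $C^r$-small perturbations with $r\ge 2$ the non-smoothness of the bundle introduces error terms that overwhelm the quadratic gain in the exponent (this is exactly the point of Remark~\ref{rem: dim34} and of the comparison with \cite[Lemma 4.3]{DP}). Your proposal does not say how the invariant $2$-plane along which you rotate is to be $C^\infty$, and without that the $C^r$-closeness requirement cannot be met.

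Second, the dimension hypothesis $m\ge5$ is used in your sketch only loosely (``enough room for rotations''), whereas in the paper it is structural: it guarantees that the hyperbolic factor $A$ on $\TT^{m-3}$ has at least two dimensions, so that one unstable eigendirection of $A$ together with the flow direction forms a genuinely $C^\infty$ smooth $2$-dimensional invariant bundle $\cE=\cE^u\oplus\cE^c$ on which the Shub--Wilkinson type perturbation can be performed with $C^r$ control. The paper's construction is therefore top-down rather than bottom-up: start from the already globally accessible, already partially hyperbolic time-$t$ map $\varphi^t$ of the Hu--Pesin--Talitskaya flow on $\cN=\cD^2\times\cL$, apply a gentle, compactly supported perturbation $\phi_\sigma$ along the smooth bundle $\cE$ to turn the zero flow exponent positive (Proposition~\ref{prop: property of h}), verify Properties~\ref{P1}--\ref{P4} survive the gentle perturbation (Proposition~\ref{prop: gentle 0}), conclude Bernoullicity from Theorem~\ref{thm: HPT1}, and finally push forward to $\cQ^m$ using Brin's embedding of the mapping torus $\cL$. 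Accessibility of the starting map does the global mixing for free, whereas in your proposal transitivity across disjoint blocks must be re-established by hand and there is no mechanism to do so. The issue you correctly identify at the end as ``the heart of the argument'' --- balancing smallness, flatness, hyperbolicity and a single ergodic component --- is genuine, but the paper's answer is precisely to avoid your decomposed architecture in favor of a single globally accessible partially hyperbolic diffeomorphism with a smooth central-unstable plane, and your proposal as written does not contain a substitute for that ingredient.
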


We stress that the above property (4)  
is crucial for us to deduce Theorem~\ref{Thma} from Theorem~\ref{Thmb}.
Using such property, 
we first construct a $C^\infty$
volume preserving nonuniformly hyperbolic 
diffeomorphism of $\cQ^m$,
which has exactly $\ell$ ergodic components and is $\ve$-close to the identity in the $C^r$ topology;
then applying an embedding result of Katok in~\cite{K}
(see Proposition~\ref{prop: Katok top}), we 
obtain a $C^\infty$ diffeomorphism of arbitrary compact manifold $\cM$, which satisfies 
all the properties listed in Theorem~\ref{Thma} (see Section~\ref{sec: B to A 2}).

Note that Katok~\cite{K} and Dolgopyat-Pesin~\cite{DP} already proved 
that every compact manifold $\cM$ of   $\dim \cM\ge 2$
admits a diffeomorphism satisfying Properties (1)-(3) of Theorem~\ref{Thmb}.
However, Property (4) of 
Theorem~\ref{Thmb}, i.e.,
the $C^r$-closeness to the identity and the sufficient flatness near the boundary,
could not be obtained from the constructions in ~\cite{K, DP},
since the start-up diffeomorphism therein is away from the identity.

To this end, 
we recall the construction of a nonuniformly hyperbolic Bernoulli flow on every manifold
by Hu, Pesin and Talitskaya  in~\cite{HPT}, in which 
the essential step is to construct a volume preserving flow $\varphi^t$ 
on a special manifold $\cN$ (see Theorem~\ref{thm: HPT} in Section~\ref{sec: start-up}),
such that the following properties hold:
\begin{itemize}
\item all Lyapunov exponents of $\varphi^t$ are non-zero except the one along flow direction;
\item each time $t$-map of $\varphi^t$ has the {\it essential accessibility} property (see Section~\ref{sec: gentle}),
from which the ergodicity and Bernoulli property can be established;
\item $\varphi^t$ is generated by a vector field $X$ that is {\it sufficiently flat} near the boundary $\p \cN$.
\end{itemize}
Based on this preliminary construction, we shall prove Theorem~\ref{Thmb} 
along the following strategy:
we start with 
the time-$t$ map of the above flow, still denoted as $\varphi^t$,
then  we  make a {\it gentle} perturbation $h_{t\sigma}$ of  $\varphi^t$  in the case when $\dim \cN\ge 5$, such that
\begin{itemize}
\item[(i)] 
$h_{t\sigma}$ is close to the identity 
in the $C^r$ topology  for sufficiently small $t$ and $\sigma$;
\item[(ii)] the essential accessibility property and the flatness near $\p \cN$ are maintained for $h_{t\sigma}$;
\item[(iii)] 
$h_{t\sigma}$ has a positive average Lyapunov exponent in the central direction.
\end{itemize} 
Finally, we conjugate $h_{t\sigma}: \cN\to \cN$ to our target map
$f: \cQ^m\to \cQ^m$, which satisfies all the  properties in Theorem~\ref{Thmb}.

\section*{{\bf Acknowledgement}}
We would like to express our gratitude to A.Wilkinson and  D.Dolgopyat for helpful conversations in the preparation of this paper. 
J.Chen is partially supported by the National Key Research and Development Program of China (No. 2022YFA1005802),
the NSFC Grant 12471186, NSF of Jiangsu BK20241916 
and the Jiangsu Shuang Chuang Project (JSSCTD 202209).
Y.Yang is partially supported by the National Science Foundation under Award No. DMS-2000167.


\section{Theorem~\ref{Thmb} Implies Theorem~\ref{Thma}}
\label{sec: B to A}

\subsection{Preliminaries from Differential Topology}
\label{sec: B to A 1}

The notion of flatness is stated by Katok~\cite{K} for  the Euclidean disk, 
yet it is easy to be adapted on the $m$-dimensional cube 
$\ds
\cQ^m=[-1, 1]^m=
\left\{x\in \RR^m:  \|x\|_\infty \le 1 \right\},
$
where $\|x\|_\infty:=\max\{|x_1|,  \dots,  |x_m|\}$ for any $x=(x_1, \dots, x_m)\in \RR^m$.
That is, we make slight modifications on the original definitions  in Section 1.4 of~\cite{K}:
we require the admissible sequence vanishes on $\p\cQ^m$,
replace the Euclidean norm by $\infty$-norm,
and fix $\ve_n$ to be $2^{-n}$ to
control the distance away from $\p\cQ^m$.

\begin{definition}[Admissible Sequence and Flatness]
\label{def flat}
A sequence $\rho=(\rho_0, \rho_1, \dots)$ of real-valued continuous functions on $\cQ^m$
is said to be
admissible if every function  $\rho_n$ is 
strictly positive on the interior $\Int(\cQ^m)$ and vanishes on the boundary $\p \cQ^m$.
The class of $\rho$-flat  functions on $\cQ^m$ is defined  by
\beqn
\begin{split}
 C^\infty_{\rho}(\cQ^m):=
\bigg\{ \phi\in C^\infty(\cQ^m):   \  & 
\left| \frac{\p^n \phi(x_1, \dots, x_m)}{\p^{i_1} x_1 \dots \p^{i_m} x_m}\right|\le \rho_n(x_1, \dots, x_m)
\ \text{holds for} \  
\\
& \hspace{-1.5cm} \forall n\ge 0, \ \forall x= (x_1, \dots, x_m)\in \cQ^m \ \text{with} \ \|x\|_\infty \ge 1-2^{-n}, \\
&  \forall  i_1, \dots, i_m\in \NN\cup\{0\} \ \text{with} \  i_1+\dots + i_m=n
\bigg\}.
\end{split}
\eeqn 
Further,  the class of $\rho$-flat 
diffeomorphisms on $\cQ^m$
is defined by
\beqn
\Diff^\infty_{\rho}(\cQ^m):=
\left\{ f\in \Diff^\infty(\cQ^m): \ f_i(x_1,\dots, x_m)-x_i\in C^\infty_{\rho}(\cQ^m), \ \forall i=1, \dots, m
\right\};
\eeqn
and the class of $\rho$-flat 
vector fields on $\cQ^m$
is defined by
\beqn
\Gamma^\infty_{\rho}(\cQ^m):=
\left\{ V\in \Gamma^\infty(\cQ^m): \ V_i(x_1,\dots, x_m)\in C^\infty_{\rho}(\cQ^m), \ \forall i=1, \dots, m
\right\}.
\eeqn
\end{definition}

It follows that any function $\phi\in C^\infty_{\rho}(\cQ^m)$ (or any vector field $V\in \Gamma^\infty_{\rho}(\cQ^m)$)
and all its partial derivatives of any order vanish on the boundary $\p\cQ^m$;
and any diffeomorphism
$f\in \Diff^\infty_\rho(\cQ^m)$
is $C^\infty$ tangent to the identity near    $\p \cQ^m$, 
i.e., $d^n(f-\id)|_{\p\cQ^m}=0$ for any $n\in \NN\cup\{0\}$.
In particular, $f|_{\p \cQ^m}\equiv \id$.

The following proposition was originally proved by Katok in~\cite{K} for  the Euclidean disk, 
and we can easily modify it for the $m$-dimensional cube $\cQ^m$.

\begin{prop}[Katok~\cite{K}, Proposition 1.1 and 1.2]
\label{prop: Katok top}
Let $\cM$ be a compact connected smooth Riemannian manifold (possibly with boundary)
of $\dim \cM=m\ge 2$.
\begin{enumerate}
\item
There exists a continuous mapping $\Psi=\Psi_{\cQ^m\to \cM}: \cQ^m\to \cM$ such that
\begin{enumerate}
\item the restriction $\ds \Psi|_{\Int(\cQ^m)}$ is a $C^\infty$ diffeomorphic embedding;
\item $\ds \Psi(\cQ^m)=\cM$ and $\ds \vol_\cM\left(\cM\backslash \Psi\left( \Int(\cQ^m)\right)\right)=0$;
\item $\ds \Psi$ is volume preserving, i.e., $\Psi_* \vol_{\cQ^m}=\vol_\cM$.
\end{enumerate}
\item
Furthermore, there exists an admissible sequence $\rho=\rho_\Psi$ on $\cQ^m$
such that 
if  $f\in \Diff^\infty_{\rho}(\cQ^m)$ is volume preserving,
then the map $F=F_{f, \Psi}: \cM\to \cM$ defined by
\beq
\label{def Ff}
F(x)=F_{f, \Psi}(x):=
\begin{cases}
\Psi\circ f\circ \Psi^{-1}(x), & \text{if}\  x\in \Psi\left( \Int(\cQ^m)\right) \\
x, & \text{otherwise}
\end{cases}
\eeq
is a $C^\infty$ volume preserving diffeomorphism of $\cM$.
\end{enumerate}
\end{prop}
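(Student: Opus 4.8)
The plan is to reduce everything to producing one ``parametrization'' $\Psi:\cQ^m\to\cM$ that is a volume-preserving $C^\infty$ diffeomorphism from $\Int(\cQ^m)$ onto a full-measure open subset of $\cM$ and degenerates near $\p\cQ^m$, and then to choose $\rho=\rho_\Psi$ so rapidly flat near $\p\cQ^m$ that $\rho$-flatness of $f$ forces $\Psi\circ f\circ\Psi^{-1}-\id$, together with all its derivatives, to vanish along $K:=\cM\setminus\Psi(\Int(\cQ^m))$, which will make the extension \eqref{def Ff} smooth. Adapting the construction of~\cite{K} from the disk to the cube, standard differential topology first yields a closed set $K\subsetneq\cM$ with $\vol_\cM(K)=0$ and a continuous surjection $\psi_0:\cQ^m\to\cM$ restricting to a $C^\infty$ diffeomorphism from $\Int(\cQ^m)$ onto $\cM\setminus K$ and mapping $\p\cQ^m$ onto $K$: when $\cM$ is closed, $K$ is the $(m-1)$-skeleton of a CW structure with a single top cell (available because $\cM$ is connected), and when $\p\cM\neq\emptyset$ one puts $\p\cM$ into $K$; the diffeomorphism is chosen to approach $K$ ``tamely'' so that it extends continuously to $\p\cQ^m$. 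After rescaling $\vol_{\cQ^m}$ so the total masses agree and arranging $\psi_0$ to be volume preserving up to that constant on a neighborhood of $\p\cQ^m$, Moser's deformation argument, applied to the volume forms $\psi_0^*\vol_\cM$ and $\vol_{\cQ^m}$ on a large compact subcube (or on an exhaustion of $\Int(\cQ^m)$ with compatible corrections), yields a diffeomorphism $\Theta$ of $\Int(\cQ^m)$, equal to $\id$ near $\p\cQ^m$, with $\Theta^*(\psi_0^*\vol_\cM)=\vol_{\cQ^m}$. Put $\Psi:=\psi_0\circ\Theta$; then $\Psi$ satisfies (1a)--(1c), and it has the same degeneration near $\p\cQ^m$ as $\psi_0$.

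Next I choose the admissible sequence. For $x\in\Int(\cQ^m)$ let $D_n(x)\ge 1$ be a continuous function bounding all partial derivatives up to order $n$ of $\Psi$, and of $\Psi^{-1}$, at every point of the $\|\cdot\|_\infty$-ball of radius $\tfrac12(1-\|x\|_\infty)$ about $x$, respectively about $\Psi(x)$ — such a bound is finite on $\Int(\cQ^m)$ since $\Psi^{\pm1}$ are smooth there, though it may blow up as $\|x\|_\infty\to 1$. Let $N_n$ be the combinatorial integer, depending only on $n$ and $m$, bounding the number of $\Psi^{\pm1}$-derivative factors in any term of the Faà di Bruno expansion of an $n$-th partial derivative of a two-fold composition. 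A diagonal argument produces a continuous function $\sigma$ on $\cQ^m$, strictly positive on $\Int(\cQ^m)$, vanishing on $\p\cQ^m$, with $\sigma(x)\le\tfrac12(1-\|x\|_\infty)$ and
\[
\lim_{\|x\|_\infty\to 1}\ \sigma(x)\,D_N(x)^{N_N}=0\qquad\text{for every }N\in\NN .
\]
Set $\rho_n:=\sigma$ for all $n$; then $\rho=\rho_\Psi=(\rho_0,\rho_1,\dots)$ is an admissible sequence depending only on $\cM$.

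Now let $f\in\Diff^\infty_\rho(\cQ^m)$ be volume preserving and $F$ as in \eqref{def Ff}. On the full-measure open set $U:=\Psi(\Int(\cQ^m))$, $F=\Psi\circ f\circ\Psi^{-1}$ is a composition of $C^\infty$ diffeomorphisms, while $F=\id$ on $K=\cM\setminus U$. To verify smoothness along $K$, fix a smooth chart $\Phi$ of $\cM$ about a point of $K$ and write $F$ in this chart as $G\circ f\circ g$, where $g:=\Psi^{-1}\circ\Phi^{-1}$ and $G:=\Phi\circ\Psi$, so that $G\circ\id\circ g=\id$; put $v:=f-\id$. Expanding $G\circ(\id+v)\circ g-G\circ\id\circ g$ by Taylor's formula in $v$ together with the Leibniz and Faà di Bruno rules, one finds that every term of $D^\alpha(F-\id)$ (read in the chart) carries a factor $D^\gamma v$ evaluated at $g(y)$ with $|\gamma|\le|\alpha|$; its remaining factors are partial derivatives of $G$ and of $g$, of orders $\le|\alpha|$, evaluated respectively at points within $\sigma(g(y))\le\tfrac12(1-\|g(y)\|_\infty)$ of $g(y)$ and at $y$, so all the $\Psi^{\pm1}$-derivatives occurring are bounded by $D_{|\alpha|}(g(y))$, up to a constant depending on $\Phi$. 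As $y\to\Phi(K)$ one has $\|g(y)\|_\infty\to 1$, and then $\rho$-flatness of $f$ gives $|D^\gamma v(g(y))|\le\rho_{|\gamma|}(g(y))=\sigma(g(y))$, whence
\[
\bigl|D^\alpha(F-\id)(y)\bigr|\ \le\ C_{|\alpha|,\Phi}\,D_{|\alpha|}(g(y))^{N_{|\alpha|}}\,\sigma(g(y))\ \longrightarrow\ 0 .
\]
Thus $F-\id$ and all its derivatives extend continuously by $0$ across $K$, so by the standard smoothness criterion (as in~\cite{K}) $F\in C^\infty(\cM,\cM)$ with $DF\equiv\id$ on $K$; hence $DF$ is everywhere invertible, and being a continuous bijection as well, $F$ is a $C^\infty$ diffeomorphism of $\cM$. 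Finally $F$ preserves $\vol_\cM$ on $U$ because $\Psi$ and $f$ do, and $\vol_\cM(\cM\setminus U)=0$, so $F_*\vol_\cM=\vol_\cM$. This proves~(2).

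The one genuinely substantial point is the coupling between the normalization step and the choice of $\rho$: having committed to the (pointwise finite but uncontrolled) growth $D_N$ of the derivatives of $\Psi^{\pm1}$ near $\p\cQ^m$, one must diagonalize over all orders $N$ simultaneously to manufacture a single positive, boundary-vanishing $\sigma$ decaying faster than every $D_N^{-N_N}$, so that the Faà di Bruno estimate above closes uniformly in $\alpha$. Everything else — the topological model, the Moser normalization, and the bijectivity and volume-preservation of $F$ — follows Katok's original argument for the disk with only cosmetic changes.
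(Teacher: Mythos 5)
Your argument follows the same two-stage outline that Katok uses for the disk (and that the paper simply cites and declares ``easily modified'' for the cube): first produce a volume-preserving parametrization $\Psi$ by combining a measure-zero topological collapse with a Moser normalization, then choose an admissible sequence $\rho_\Psi$ decaying fast enough that the chain/Leibniz/Fa\`a di Bruno estimate for the conjugate $\Psi\circ f\circ\Psi^{-1}-\id$ closes. The heart of the proposition, statement~(2), is handled correctly: the observation that every term of $D^\alpha(F-\id)$ carries at least one factor of a derivative of $v=f-\id$ (the ``no-$v$'' pieces cancelling after a mean-value telescoping in $G$), the requirement $\sigma\le\tfrac12(1-\|x\|_\infty)$ to keep $g(y)+v(g(y))$ inside the region where the derivative bounds $D_n$ are valid, and the diagonal construction of a single $\sigma$ beating every $D_N^{N_N}$ near $\p\cQ^m$ are exactly the right ingredients. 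Two small bookkeeping remarks: the telescoping term needs one extra order of $G$-derivative, so the bound should really invoke $D_{|\alpha|+1}$ rather than $D_{|\alpha|}$, and the $\rho$-flatness bound $|D^\gamma v(x)|\le\rho_{|\gamma|}(x)$ is only available for $\|x\|_\infty\ge 1-2^{-|\gamma|}$, which is harmless since you only need the estimate as $y\to\Phi(K)$, but deserves a sentence.

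The one place where you gloss over a genuine difficulty is the Moser normalization in statement~(1), specifically the assertion that one can ``arrange $\psi_0$ to be volume preserving up to a constant on a neighborhood of $\p\cQ^m$'' and take $\Theta=\id$ there. If $\psi_0$ were $C^1$ up to the boundary, its Jacobian would be degenerate on $\p\cQ^m$ (since $\p\cQ^m$ is collapsed onto the measure-zero set $K$), making unit Jacobian near the boundary impossible; if $\psi_0$ is only continuous at the boundary, the statement is not automatically false, but it then requires a careful construction (for the sphere and torus one can write explicit equal-volume collapses, but for a general cell structure this is not obvious). The correct formulation, and what Katok actually does, is weaker: apply a noncompact Moser argument to get a diffeomorphism $\Theta$ of $\Int(\cQ^m)$ with $\Theta^*(\psi_0^*\vol_\cM)=\vol_{\cQ^m}$ and then argue directly that the composition $\Psi=\psi_0\circ\Theta$ still extends continuously to $\cQ^m$; one cannot in general take $\Theta$ to be the identity near $\p\cQ^m$, nor does one need to, because the admissible sequence $\rho_\Psi$ chosen in~(2) absorbs whatever blow-up $\Theta$ introduces. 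Since the paper itself offers no proof here beyond the citation to Katok, this does not contradict anything in the paper, but it is a real gap in your write-up of~(1) as stated.
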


Let us make some comments on Statement (2) in the above.
The derivatives of the almost conjugacy
$\Psi$  may blow up very fast near $\p\cQ^m$,
and thus the admissible sequence  $\rho=\rho_\Psi$ should
vanish relatively faster near $\p\cQ^m$ to ensure 
 the $C^\infty$ smoothness of $F$. 
We further notice that 
if an admissible sequence $\rho=(\rho_0, \rho_1, \dots)$ 
satisfies Statement (2)  of Proposition~\ref{prop: Katok top},
so does $\wrho=(\wrho_0, \wrho_1, \dots)$
with $\wrho_n\le \rho_n$ for every $n\ge 0$.
Hence for any $\ve>0$ and any $r\in \NN$,
we may choose an admissible sequence $\rho=\rho_{\Psi, \ve, r}$
such that for any  $f\in \Diff^\infty_{\rho}(\cQ^m)$,
the corresponding diffeomorphism  $F_{f, \Psi}$ given by \eqref{def Ff}
satisfies
\beqn
\dist_{C^r}\left( F_{f, \Psi}|_{\Psi(\cQ_r^m)}, \id \right)<\ve,
\ \ \text{where} \ \
\cQ_r^m:=\left\{x\in \RR^m: 1-2^{-r}\le \|x\|_\infty\le 1\right\}.
\eeqn
It then follows that there exists a constant $C_{\Psi, r}>0$ such that 
\beq
\label{eq: Cr est F id}
\begin{split}
\dist_{C^r}\left( F_{f, \Psi}, \id \right)
& = \max\left\{ \dist_{C^r}\left( F_{f, \Psi}|_{\Psi(\cQ_r^m)}, \id \right),
\dist_{C^r}\left( F_{f, \Psi}|_{\Psi(\cQ^m\backslash\cQ_r^m)}, \id \right)
\right\} \\
& \le \max\left\{\ve, C_{\Psi, r}\|f-\id\|_{C^r}.
\right\}
\end{split}
\eeq

\subsection{Proof of Theorem~\ref{Thma}}
\label{sec: B to A 2}

Suppose now  that Theorem~\ref{Thmb} holds, 
and we show how
Theorem~\ref{Thma} can be deduced  from Theorem~\ref{Thmb}.

Let $\cM$ be a  compact connected smooth Riemannian manifold (possibly with boundary)
of dimension  $m\ge 5$,
and let $\Psi=\Psi_{\cQ^m\to \cM}$ be the almost conjugacy 
obtained by Statement (1) of Proposition~\ref{prop: Katok top}.
For any $\ve>0$ and any $r\in \NN$,
let $\rho=\rho_{\Psi, \ve, r}=(\rho_0, \rho_1, \dots)$ be the admissible sequence
and
let $C_{\Psi, r}$ be the constant 
such that~\eqref{eq: Cr est F id} holds.

Given $\ell\in \NN\cup \{\infty\}$, 
we set $a_\ell=1$ and
$\ds
a_k=1-2^{-k+1}
$
for $1\le k< \ell$.
One can then slice the   cube
$\cQ^m=[-1, 1]^m$ into 
$\ell$ rectangular boxes $\{\cQ_k^m\}_{1\le k\le \ell}$, where
$\ds
\cQ_k^m = [-1, 1]^{m-1} \times [a_{k-1}, a_k]
$
for $1\le k\le \ell$
(note that
$\ds
\cQ^m_\infty=[-1, 1]^{m-1} \times \{1\}
$
is degenerate).
Each $\cQ_k^m$ is an affine image of $\cQ^m$
by a linear scaling of factor $2^{-k}$ in the last coordinate, 
that is,
$\cQ^k_m=\pi_k(\cQ^m)$, where  
$\ds
\pi_k(x_1, \dots, x_{m-1}, x_m)=(x_1, \dots, x_{m-1}, a_{k-1}+ 2^{-k} (x_m + 1) ).
$

Now for every $k\in \NN\cap [1, \ell]$, applying Theorem~\ref{Thmb}
with respect to  
\beq
\label{def vek rhok}
\ve_k= \ve  C_{\Psi, r}^{-1}\cdot 4^{-k^2r}, \ \
r_k=kr, \ \  \text{and} \ \ 
\rho^k=(\rho^k_0, \rho^k_1, \dots)
\ \text{with} \ \rho^k_n=\rho_n \cdot 2^{-kn},
\eeq
we obtain a
$C^\infty$ volume preserving nonuniformly hyperbolic Bernoulli
diffeomorphism $f_k$ of $\cQ^m$ such that
$\ds \|f_k-\id\|_{C^{kr}}\le \ve_k$ and
$\ds f_k\in \Diff^\infty_{\rho^k}(\cQ^m)$.
Then we define a map $f: \cQ^m\to \cQ^m$ be setting
\beqn
\label{def f}
f(x)=
\begin{cases}
\pi_k\circ f_k\circ \pi_k^{-1}(x), \ & \text{if} \ x\in \cQ^m_k, \\
x, \ &  \text{otherwise}.
\end{cases}
\eeqn
We claim that
$f$ is a $C^\infty$ diffeomorphism of $\cQ^m$. 
This is obvious when $\ell<\infty$;
in the case when $\ell=\infty$, 
our construction implies that
$f$ is $C^\infty$ on $[-1, 1]^{m-1}\times [-1, 1)$,
and it is $C^\infty$ at the side $[-1, 1]^{m-1}\times \{1\}$
since
\beqn
\|f|_{\cQ^m_k}-\id\|_{C^{kr}}
=\|\pi_k \circ (f_k-\id)\circ \pi_k^{-1} \|_{C^{kr}}
\le (2^k)^{kr} \|f_k-\id\|_{C^{kr}} \le 
\ve C_{\Psi, r}^{-1}\cdot 2^{-k^2 r}
\eeqn
vanishes as $k\to\infty$. The above estimate also provides that 
\beq
\label{dist f id}
\|f-\id\|_{C^{r}}\le \sup_{1\le k\le \ell}
\|f|_{\cQ^m_k}-\id\|_{C^{kr}}
\le  \ve C_{\Psi, r}^{-1}.
\eeq
Moreover, 
since $\ds f_k\in \Diff^\infty_{\rho^k}(\cQ^m)$, by
the definition of the admissible sequence $\rho^k$ in~\eqref{def vek rhok},
we have $f\in \Diff^\infty_{\rho}(\cQ^m)$
for $\rho=\rho_{\Psi, \ve, r}$.

Finally, we apply  Proposition~\ref{prop: Katok top}
and obtain a $C^\infty$ volume preserving diffeomorphism $F=F_{f, \Psi}: \cM\to \cM$.
It is also clear from our construction
that $F$ is nonuniformly hyperbolic,
and for each $ k\in \NN\cap [1, \ell]$, the set
$\Psi(\cQ^m_k)$ is an ergodic (in fact, Bernoulli) component of $F$.
Moreover, it follows from~\eqref{eq: Cr est F id} and 
~\eqref{dist f id} that $\dist_{C^r}\left( F, \id \right)\le \ve$.
The proof of Theorem~\ref{Thma} is now complete.


\section{Construction and Proof of Theorem~\ref{Thmb}}
 
\subsection{The Start-up Diffeomorphism}
\label{sec: start-up}
 
To prove Theorem~\ref{Thmb}, we start with the time-$t$ map $\varphi^t$ of 
a volume preserving
nonuniformly hyperbolic Bernoulli flow on a special manifold $\cN$, 
which was constructed by Hu, Pesin and Talitskaya in~\cite{HPT}.

To this end,
we first recall the Katok map
$g$ of the two-dimensional Euclidean disk
$\ds
\cD^2:=\{(x_1, x_2)\in \RR^2:  x_1^2+x_2^2\le 1\},
$
which was constructed in~\cite{K} as a 
prototype of nonuniformly hyperbolic surface diffeomorphisms.
Similar to Definition~\ref{def flat},
the notion of flatness for functions and maps on $\cD^2$ was introduced in Section 1.4 of~\cite{K}, 
and it can also be defined for smooth vector fields on $\cD^2$ in a similar fashion.

\begin{thm}[Katok~\cite{K}, Theorem A]
\label{thm: Katok map}
For any admissible sequence $\rho$ on $\cD^2$, 
there exists an area preserving nonuniformly hyperbolic Bernoulli diffeomorphism 
$g\in \Diff^\infty_\rho(\cD^2)$.
\end{thm}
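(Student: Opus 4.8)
The plan is to reproduce Katok's construction from \cite{K}: build $g$ by starting from a uniformly hyperbolic model on the torus, destroying uniformity near a fixed point by a carefully reparametrized time change, and then transporting the result to the disk $\cD^2$. Throughout, the freedom in how flat and how localized the time change is will be what lets us match an arbitrary admissible sequence $\rho$.

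\textbf{Seed and slowdown.} I would fix a linear Anosov automorphism $A$ of $\TT^2$, say $A\in SL(2,\ZZ)$ with eigenvalues $\lambda>1>\lambda^{-1}$ and a fixed point at $0$; $A$ is area preserving, Anosov, and Bernoulli, with constant Lyapunov exponents $\pm\log\lambda$. In a small disk $D_{r_0}$ around $0$, where $A$ is the time-$\log\lambda$ map of the linear hyperbolic flow $(\xi,\eta)\mapsto(e^t\xi,e^{-t}\eta)$ in eigencoordinates, I would replace $A$ by the time-one map of the \emph{reparametrized} flow of speed $\psi$, where $\psi$ is a smooth nonnegative function equal to $1$ outside $D_{r_0}$, positive on $D_{r_0}\setminus\{0\}$, and vanishing \emph{flatly} at $0$. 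This produces a homeomorphism $g_1$ of $\TT^2$ that equals $A$ outside $D_{r_0}$, is $C^\infty$ off $0$, is $C^\infty$ at $0$ with $d^n(g_1-\id)(0)=0$ for all $n$ (this infinite order of contact is exactly what the flatness of $\psi$ buys), and preserves a smooth measure $\nu$ coinciding with Lebesgue outside $D_{r_0}$. Either I choose the reparametrization Hamiltonian (taking $\psi$ a function of $\xi\eta$), so that $\nu$ is Lebesgue outright, or I straighten $\nu$ to Lebesgue by a homeomorphism $\phi$ that is $C^\infty$ off $0$; in either case the flatness of $\psi$ guarantees that $g_2:=\phi g_1\phi^{-1}$ is genuinely $C^\infty$ and still infinitely tangent to $\id$ at $0$.

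\textbf{Dynamics of $g_2$ and passage to the disk.} Since $g_2$ agrees with the Anosov map $A$ off $D_{r_0}$ and merely slows orbits down (without reversing them) inside $D_{r_0}$, a Lebesgue-typical orbit still spends a definite fraction of its time in the uniformly hyperbolic region, so the Lyapunov exponents of $g_2$ are nonzero almost everywhere — $g_2$ is nonuniformly, though no longer uniformly, hyperbolic. For ergodicity I would use that the $A$-stable/unstable foliations are $g_2$-invariant families of curves (the curves are unchanged; only the speed along them changes) and absolutely continuous, so a Hopf-type argument — equivalently, the local ergodicity of hyperbolic measures from Pesin theory — shows $g_2$ is ergodic, and the Pesin–Ornstein theory then upgrades ergodicity plus nonuniform hyperbolicity to the Bernoulli property. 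To reach $\cD^2$, I would carry out the slowdown equivariantly under the involution $x\mapsto -x$ at all four $2$-torsion points of $\TT^2$ (the other three, cyclically permuted by $A$, must also be slowed so the quotient map is smooth). Then $g_2$ descends to a $C^\infty$, area preserving, nonuniformly hyperbolic, Bernoulli map $\bar g$ of the quotient sphere $\mathbb{S}^2=\TT^2/\pm$ that is infinitely tangent to $\id$ at the cone point $p_0=[0]$. Stereographic projection of $\mathbb{S}^2\setminus\{p_0\}$ onto $\RR^2$, pushed forward to $\Int(\cD^2)$ and followed by a Moser-type diffeomorphism straightening the (smooth, finite) image area form to the standard area, yields a map $g$ that extends continuously to $\cD^2$ with $g|_{\p\cD^2}=\id$; because $\bar g$ is infinitely flat at $p_0$, this extension is $C^\infty$ with $d^n(g-\id)|_{\p\cD^2}=0$ for all $n$. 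Choosing $\psi$ flat enough at the outset makes all the resulting derivative bounds near $\p\cD^2$ beat the prescribed $\rho$, so $g\in\Diff^\infty_\rho(\cD^2)$.

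\textbf{Main obstacle.} Two points carry the weight. Technically, the whole scheme rests on propagating $C^\infty$ smoothness — and, ultimately, the prescribed flatness $\rho$ — through the singular reparametrization, the measure correction, and the stereographic chart; this is exactly what forces $\psi$ to be flat and demands careful bookkeeping of derivative estimates near $0$ and near $\p\cD^2$. Conceptually, however, the real heart is the dynamics of $g_2$: proving that a merely nonuniformly hyperbolic (non-Anosov) map is ergodic and Bernoulli requires the absolute continuity of the Pesin stable and unstable laminations together with a local-ergodicity argument that passes through the slow region, which in \cite{K} is done essentially by hand. I expect this ergodicity/Bernoulli step to be the main difficulty; the smoothness bookkeeping is the main technical nuisance.
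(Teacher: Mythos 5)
Your outline matches Katok's actual scheme --- slow down a toral automorphism near torsion points, descend through the branched double cover $\TT^2 \to \TT^2/\pm \cong \mathbb{S}^2$, then open one cone point into the boundary circle --- but the torsion-point step as you describe it would fail. You let $A$ fix only the origin, with the other three $2$-torsion points ``cyclically permuted by $A$'' (which is what happens for $\begin{pmatrix}2&1\\1&1\end{pmatrix}$), and propose to ``also slow'' those. Slowing down, however, is only an operation at a fixed point: it time-changes the local flow whose time-$1$ map is the local normal form of $A$, and so produces a map infinitely tangent to the identity \emph{at the point it fixes}. There is no analogous operation at a point $A$ moves. And you cannot skip those points either: writing a real-linear $L$ as $L(z)=\alpha z+\beta\bar z$ in a complex chart centered at a cone point, the induced map in the cone coordinate $w=z^2$ is $\alpha^2 w + 2\alpha\beta|w| + \beta^2\bar w$; the $|w|$ term is non-smooth at $w=0$ whenever $\alpha\beta\neq 0$, i.e. whenever $L$ is not conformal --- which a hyperbolic real matrix never is --- regardless of whether the cone point is fixed. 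What Katok does is take $A\equiv I\pmod 2$, so that \emph{all four} $2$-torsion points are fixed by $A$, and perform the slowdown at all four; this is also why the Katok map on $\cD^2$, as the present paper records just after the statement, has singularity set $\p\cD^2$ together with \emph{three interior fixed points}.

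A secondary issue is your ``Hamiltonian option'' of taking $\psi$ to depend only on $\xi\eta$. That choice does make $\psi X$ divergence-free, but $\xi\eta$ vanishes on the entire coordinate axes, so such a $\psi$ cannot equal $1$ off a small disk while vanishing only at $0$: the slowdown would not be localized. Katok instead localizes $\psi$ (depending on $\xi^2+\eta^2$, say), accepts that the slowed flow preserves $\psi^{-1}\,d\xi\,d\eta$ rather than Lebesgue, and applies a Moser correction; the flatness of $\psi$ at $0$ is exactly what keeps both the slowed map and the Moser conjugacy $C^\infty$ across the slow point. Your remaining ingredients --- absolute continuity of the Pesin laminations with a Hopf-type local ergodicity argument, Ornstein theory for the Bernoulli upgrade, and making $\psi$ flat enough that the derivative bounds near $\p\cD^2$ after stereographic projection dominate the prescribed $\rho$ --- are the right picture.
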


We denote by $\cS_g$
the singularity set of the Katok map $g$, 
which consists of $\p\cD^2$ and three fixed points of $g$. 
It was shown by Dolgopyat and Pesin in Proposition 2.2 of~\cite{DP} that 
there exist continuous invariant stable/unstable cone families, distributions and foliations 
for $g$ in $\cD^2\backslash \cS_g$,
with uniform control on any compact subset of $\cD^2\backslash \cS_g$.

Applying a well known result of Smale (see~\cite{S}, Theorem B), i.e.,
the space of $C^\infty$ diffeomorphisms of $\cD^2$
which are the identity in some neighborhood of $\p\cD^2$
is contractible,
Hu, Pesin and Talitskaya proved in \cite{HPT} that there exists 
a smooth isotopy connecting  the identity map and the Katok map,
with some additional properties.

\begin{prop}[Hu-Pesin-Talitskaya~\cite{HPT}, Proposition 3 and 4]
\label{prop: Katok isotopy}
For any admissible sequence $\rho$ on $\cD^2$, 
let $g\in \Diff^\infty_\rho(\cD^2)$ be the Katok map given in Theorem~\ref{thm: Katok map}.
There exists a $C^\infty$ map $G: \cD^2 \times [0,1] \to \cD^2$ such that
\begin{enumerate}
\item 
$G(\cdot, 0)=\id$ and $G(\cdot, 1)=g$. Moroever, 
$d^kG(x,1)=d^kG(g(x),0)$ for any $k\ge 0$;
\item for each $t\in [0,1]$, the map $G(\cdot, t): \cD^2 \to \cD^2$ is an
area preserving diffeomorphism;
\item
there exists a neighborhood $\cU$ of $\p\cD^2$ and 
a $\rho$-flat  vector field $V$ in $\cD^2$, 
such that 
$G(\cdot, t)|_{\cU}$ is the time-$t$ map of the flow generated by $V$ restricted to $\cU$.
\end{enumerate}
\end{prop}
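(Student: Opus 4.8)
The goal is to prove Proposition~\ref{prop: Katok isotopy}, constructing a smooth area-preserving isotopy from the identity to the Katok map $g$ which, near $\p\cD^2$, is generated by a $\rho$-flat vector field. Here is the plan.

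\textbf{Step 1: Localize near the boundary.} Since $g\in\Diff^\infty_\rho(\cD^2)$, it is $C^\infty$-tangent to the identity along $\p\cD^2$; in fact $g-\id$ is $\rho$-flat. The first task is to produce, on a collar neighborhood $\cU$ of $\p\cD^2$, an area-preserving flow $\psi^t$ with $\psi^0=\id$ and $\psi^1=g|_{\cU}$, generated by a $\rho$-flat vector field $V$. Because $g$ is so flat near the boundary, $g|_\cU$ can be written as $\id + (\text{something } \rho\text{-flat})$; one connects it to the identity by the straight-line isotopy $\id + s(g-\id)$ in a suitable sense, but area preservation must be enforced. A cleaner route: area-preserving diffeomorphisms $C^\infty$-tangent to the identity on $\p\cD^2$ can be written (via a time-dependent Hamiltonian/stream function) as the time-one map of a flow whose generator is divergence-free and $\rho$-flat. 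One builds the stream function $H$ for $g$ near the boundary out of the flatness estimates, lets $V = \nabla^\perp H$, checks $V$ is $\rho$-flat (differentiation of a $\rho$-flat function is controlled by the definition of admissible sequence, possibly after shrinking $\rho$, but since $\rho$ is given a priori the paper must arrange that the Katok map is taken $\rho'$-flat for an even flatter $\rho'$ — this is the standard trick), and sets $G(\cdot,t)|_\cU$ to be the time-$t$ map of $V$. The neighborhood $\cU$ is chosen so that this flow stays inside $\cD^2$ for $t\in[0,1]$.

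\textbf{Step 2: Extend the isotopy to all of $\cD^2$.} Away from the collar, we no longer need the flow structure — only that $G(\cdot,t)$ is a smooth area-preserving isotopy from $\id$ to $g$ matching the boundary flow on $\cU$. Here one invokes Smale's theorem: the group of $C^\infty$ diffeomorphisms of $\cD^2$ equal to the identity near $\p\cD^2$ is contractible, hence connected. One first modifies $g$ by the collar flow to get a diffeomorphism $g_0 = (\psi^1)^{-1}\circ g$ that is the identity near $\p\cD^2$ — actually $g_0 = \id$ near $\p\cD^2$ automatically once $\psi^1 = g$ on $\cU$. Then Smale gives a path from $\id$ to $g_0$ through diffeomorphisms that are the identity near the boundary; to make the path \emph{area-preserving} one appeals to Moser's theorem / the fact that the area-preserving subgroup is also connected (again via stream functions, using that $H^1_c(\cD^2)=0$ so every closed $1$-form with compact support is exact). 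Concatenating $\psi^t$ (on the collar, extended by identity) with this area-preserving path gives $G(\cdot,t)$ on all of $\cD^2$.

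\textbf{Step 3: Verify the matching conditions (1).} The endpoint conditions $G(\cdot,0)=\id$, $G(\cdot,1)=g$ are built in. The cocycle-type identity $d^kG(x,1)=d^kG(g(x),0)$ for all $k\ge0$ is a normalization ensuring the isotopy can be concatenated periodically / glued to form the suspension flow on $\cN$ later; it is arranged by reparametrizing the isotopy in $t$ near the endpoints so that $G(\cdot,t)$ is locally constant in $t$ at $t=0$ and $t=1$ (i.e., $\partial_t G = 0$ to infinite order there), and $d^kG(g(x),0)=d^k\id = $ the relevant jet of $g$ at $g(x)$... one checks $d^k G(x,1)$ equals the $k$-jet of $g$ at $x$ and $d^k G(g(x),0)$ the $k$-jet of the identity composed appropriately; the stated equality is the compatibility needed so that the map $\cN\to\cN$ built from $G$ is $C^\infty$ across the gluing.

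\textbf{Main obstacle.} The delicate point is maintaining \emph{all three} of: smoothness of the isotopy, exact area preservation at every time $t$, and $\rho$-flatness of the generating vector field on the collar, \emph{simultaneously}. Smale's theorem alone gives smoothness and boundary-triviality but not area preservation; Moser's argument gives area preservation but one must check it can be done relative to the collar flow and without destroying flatness. The flatness near $\p\cD^2$ forces one to choose the Katok map itself to be flat with respect to a strictly faster-vanishing admissible sequence than $\rho$, so that after taking one derivative (to pass from the diffeomorphism $g$ to its stream function's gradient, or vice versa) one still lands in $C^\infty_\rho$. Keeping track of this loss of flatness under the constructions — and confirming the collar flow genuinely stays in $\cD^2$ — is the technical heart of the argument; everything else is soft topology (contractibility) plus the standard Hamiltonian description of area-preserving maps on a disk.
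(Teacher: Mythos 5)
The paper does not actually prove Proposition~\ref{prop: Katok isotopy}; it is imported verbatim from Hu--Pesin--Talitskaya \cite{HPT} (their Propositions 3 and 4), with the text only noting that the key ingredient is Smale's theorem that the group of diffeomorphisms of $\cD^2$ equal to the identity near $\p\cD^2$ is contractible. Your two-stage scheme---produce a $\rho$-flat divergence-free $V$ on a collar $\cU$ with $\phi_V^1=g$ there, then connect $g_0=(\phi_V^1)^{-1}\circ g$ (which is the identity on $\cU$) to $\id$ via Smale for the topology and Moser for area preservation---is exactly the route the paper attributes to \cite{HPT}, so you have the same overall approach, not a different one.

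There is, however, a concrete error in your Step 3. You propose to secure the jet-compatibility $d^kG(x,1)=d^kG(g(x),0)$ by reparametrizing so that $G(\cdot,t)$ is locally constant in $t$ at $t=0,1$. That is incompatible with Statement (3): on $\cU$ the map $G(\cdot,t)$ must literally be the time-$t$ map of the autonomous flow of $V$, so $\partial_t G(x,0)=V(x)$, which cannot vanish identically on $\cU$ since $\phi_V^1=g\neq\id$ there; and replacing $\phi_V^t$ by $\phi_V^{\chi(t)}$ means $G|_\cU$ is no longer ``the time-$t$ map of the flow generated by $V$.'' The correct observation is that no reparametrization of the collar flow is needed: $\partial_\tau^k\phi_V^\tau(x)\big|_{\tau=1}$ and $\partial_\tau^k\phi_V^\tau(g(x))\big|_{\tau=0}$ are the same universal expression in $V$ and its derivatives at $g(x)$ because the flow is autonomous, so on $\cU$ the matching is automatic. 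Away from $\cU$ one writes $G(\cdot,t)=\phi_V^t\circ G_0(\cdot,\chi(t))$ with $G_0$ the interior Smale--Moser isotopy and $\chi$ flat at the endpoints, so only the $G_0$ factor is killed at $\tau=0,1$ and the $\phi_V^t$ factor supplies the matching jets. Note also that $d^k$ in Statement (1) must mean $\partial_\tau^k$, since the full $k$-jet in $(x,\tau)$ cannot match ($\partial_x G(x,1)=dg(x)\ne\id=\partial_x G(g(x),0)$); the final sentences of your Step 3, which speak of ``the $k$-jet of $g$ at $x$,'' conflate $\tau$-jets with $x$-jets and should be discarded.
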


\begin{remark}
\label{rem: dim2}
Statement (3) of Proposition~\ref{prop: Katok isotopy} implies that $g_t:=G(\cdot, t)\in \Diff^\infty_\rho(\cD^2)$ for any $t\in [0, 1]$,
and $g_t$ can be arbitrarily close to the identity for sufficiently small $t$. 
However, Proposition~\ref{prop: Katok isotopy} does not guarantee
the hyperbolicity and ergodicity of the map $g_t: \cD^2\to \cD^2$ for $t\in (0, 1)$,
which impedes us to prove Theorem~\ref{Thma} and~\ref{Thmb}  in dimension $2$.
\end{remark}

We also need Brin's construction from~\cite{B}.
Given $m\ge 5$, let $A: \TT^{m-3}\to \TT^{m-3}$ be the 
hyperbolic automorphism  
induced by a block diagonal matrix 
$\diag\{A_1, \cdots, A_{m'}\}$,
where 
$m'=[(m-3)/2]$,
$\ds A_i=
\begin{pmatrix}
2 & 1 \\ 1 & 1
\end{pmatrix}
$
for $1\le i<m'$,
while
$\ds 
A_{m'}=
\begin{pmatrix}
2 & 1 \\ 1 & 1
\end{pmatrix}
$ if $m$ is odd
and
$\ds 
A_{m'}=
\begin{pmatrix}
2 & 1 & 1 \\ 1 & 1 & 1 \\ 0 & 1 & 2
\end{pmatrix}
$
 if $m$ is even.
The mapping torus of   $A$  
is defined by 
\beq
\label{def cL}
\cL:=\TT^{m-3}\times [0, 1]/\sim \,
=\left\{ (y, \tau): \ y\in \TT^{m-3}, \ \tau\in [0, 1]   \right\}/
\left\{ (y, 1)\sim (Ay, 0)\right\}.
\eeq
Now we recall the special manifold $\cN$ introduced in~\cite{HPT}
for the case $\dim \cN=m\ge 5$, i.e.,
\beq
\label{def cN}
\cN:=\cD^2\times \cL
=\left\{ (x, y, \tau): \ x \in \cD^2, \ (y, \tau) \in \cL \right\}.
\eeq
Let $\cK$ be the mapping torus of $g\times A$, where $g$ is the Katok map
and 
$A$ is the hyperbolic toral automorphism  from Brin's construction,
i.e., $\cK:=\cD^2\times \TT^{m-3}\times [0, 1]/\sim$, where $\sim$ is the identification $(x, y, 1)=(g(x), Ay, 0)$.
Then $\cK$ is diffeomorphic to $\cN$ via the diffeomorphism $\cG: \cK\to \cN$ given by
$\ds
\cG(x, y, \tau)=(G(x, \tau), y, \tau)
$,
where 
$G$ is the smooth isotopy given by Proposition~\ref{prop: Katok isotopy}. 

We further recall the $C^\infty$ smooth vector field $X$ on $\cN$ introduced in~\cite{HPT}, that is,
\beq
\label{def X}
X(G(x, \tau), y, \tau):=\left( \frac{\p G}{\p \tau}(x, \tau), 0, \alpha(G(x, \tau))\right), \ \text{for any} \ (x, y, \tau)\in \cK,
\eeq
where the function $\alpha: \cD^2\to [0, 1]$ is chosen with the following properties:
\begin{enumerate}
\item[(A1)] $\alpha$ and all its partial derivatives of any order vanish on $\p\cD^2$;
\item[(A2)] $\alpha(x)>0$ for any $x\in \Int(\cD^2)$, and $\alpha(x)=1$ for any $x\in \cD^2\backslash \cU$;
\item[(A3)] $\alpha(x)^{-1} V(x)\to 0$ as $x\to \p\cD^2$,
\end{enumerate}
where 
the neighborhood $\cU$ of $\p\cD^2$ 
and the vector field $V$ on $\cD^2$ are  
given by Proposition~\ref{prop: Katok isotopy}.
For our purpose, we assume a stronger property than (A1), that is,
\begin{enumerate}
\item[(A1$'$)] 
$\alpha\in C^\infty_{\brho}(\cD^2)$, where 
$\brho=(\brho_0, \brho_1, \dots)$ is any a priori 
given  admissible sequence on $\cD^2$ with range in $[0, 1]$,
and $\rho=\brho^2:=(\brho_0^2, \brho_1^2, \dots)$ is the admissible sequence chosen
in the assumption of Theorem~\ref{thm: Katok map} and Proposition~\ref{prop: Katok isotopy}.
\end{enumerate}

\begin{thm}[Hu-Pesin-Talitskaya~\cite{HPT}, Lemma 7 and 9]
\label{thm: HPT}
Let $\cN$ be the manifold given by \eqref{def cN} with $\dim\cN=m\ge 5$,
and
let $\varphi^t$ be the time-$t$ map of the flow on $\cN$ 
generated by the vector field $X$  in \eqref{def X},
which satisfies Conditions (A1$'$), (A2) and (A3).
Then the following statements hold for any $t\in (0, 1]$:
\begin{enumerate}
\item $\varphi^t$ preserves the volume $\vol_{\cN}$ 
(since $X$ is divergence free);
\item 
all Lyapunov exponents of $\varphi^t$ are non-zero, except the one along the flow direction,
for $\vol_{\cN}$-almost every point;
\item $\varphi^t$ is ergodic (in fact,  Bernoulli) with respect to $\vol_{\cN}$;
\item $\varphi^t \in \Diff^\infty_{\brho}(\cN)$, i.e., $\varphi^t$ is $\brho$-flat
(since the vector field $X$ is chosen to be $\brho$-flat).
\end{enumerate}
\end{thm}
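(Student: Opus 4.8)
The plan is to transport the flow through the smooth conjugacy $\cG\colon\cK\to\cN$ of Section~\ref{sec: start-up} to the mapping torus $\cK$ of $g\times A$. There the cross-section $\Sigma:=\cD^2\times\TT^{m-3}\times\{0\}$ is global for the transported flow (its $\tau$-speed equals $\alpha>0$ on the interior), so $\varphi^t$ is conjugate to the time-$t$ map of a suspension-type flow over a first-return map $P\colon\Sigma\to\Sigma$; on the complement of the boundary collar $\cU\times\cL$, where $\alpha\equiv 1$, one has $P=g\times A$ with roof identically $1$. All four statements are then read off from the features of $g$, of $A$, and of this suspension structure, the collar being controlled by the flatness of $X$ and by the uniform cone and distribution estimates for $g$ on compact subsets of $\cD^2\setminus\cS_g$ recalled from \cite{DP}.

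For (1) I would check directly that $\div X=0$: in the $(x,y,\tau)$ coordinates on $\cK$ the relevant volume is $\vol_{\cD^2}\times\vol_{\TT^{m-3}}\times d\tau$, and the area preservation of each $G(\cdot,\tau)$, equivalently $\partial_\tau\det D_xG(x,\tau)=0$, together with $|\det A|=1$, forces $\varphi^t$ to preserve $\vol_\cN$. For (2) I would use the standard description of the Lyapunov exponents of a suspension flow: transverse to the flow direction they coincide, up to division by the finite mean of the roof function, with those of the base map $g\times A$. The Katok map $g$ has one positive and one negative exponent at $\vol_{\cD^2}$-a.e.\ point of $\cD^2\setminus\cS_g$; the automorphism $A$ is Anosov with $m-3$ nonzero exponents, each block $A_i,A_{m'}$ being a hyperbolic integer matrix; hence $g\times A$ has $m-1$ nonzero exponents a.e., and the suspension adds exactly one vanishing exponent, along the flow direction. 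The collar intervenes only through the flatness of $X$ and the uniform estimates of \cite{DP}, which keep its contribution controllable.

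For (3), the heart of the matter, I would construct the invariant stable and unstable laminations of the time-$t$ map $\varphi^t$ by combining the Pesin laminations of $g$ on $\cD^2$ with the Anosov foliations of $A$ on $\TT^{m-3}$, joining $\tau$-levels along the flow, and then show these laminations are essentially accessible. The Hopf-type argument for nonuniformly hyperbolic systems with singularities, applicable since $g$ satisfies the Katok--Strelcyn regularity conditions, then yields ergodicity of $\varphi^t$; and the Bernoulli property follows because $\varphi^t$ is a $K$-system (no zero exponents transverse to the flow, together with ergodicity), via the standard Bernoulli criterion for hyperbolic maps with singularities, or equivalently because $g$ and $A$ are Bernoulli, so the suspension of $g\times A$ is a Bernoulli flow whose time-$t$ maps are Bernoulli. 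I expect this to be the main obstacle: arranging essential accessibility in the presence of the singularity set $\cS_g$ and of the nonuniformity of $g$, and upgrading ergodicity to the Bernoulli property, each require the full apparatus of Pesin theory for maps with singularities rather than any soft argument.

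Finally, for (4): near $\partial\cN$ the vector field $X$ reduces to $(V,0,\alpha)$, with $V$ the $\rho=\brho^2$-flat field of Proposition~\ref{prop: Katok isotopy} and $\alpha\in C^\infty_{\brho}(\cD^2)$ by (A1$'$), so $X$ vanishes to infinite order on $\partial\cN$. Writing $\varphi^t(p)-p=\int_0^t X(\varphi^s(p))\,ds$ and differentiating through the chain rule, a Gr\"onwall-type estimate bounds every partial derivative of $\varphi^t-\id$ near $\partial\cN$ in terms of those of $X$; the surplus flatness built into the construction, namely the choice $\rho=\brho^2$, absorbs the growth coming from the derivative combinatorics and from integrating over $t\in[0,1]$, and one concludes $\varphi^t\in\Diff^\infty_{\brho}(\cN)$.
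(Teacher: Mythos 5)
This is essentially a cited result: statements (1)--(3) are imported from Lemmas 7--9 of \cite{HPT}, and the paper's only contribution here is the brief remark immediately after the theorem, which justifies (4) by observing that on the collar $\cU\times\cL$ one has $X(x,y,\tau)=(V(x),0,\alpha(x))$, with $V$ being $\brho^2$-flat and $\alpha$ being $\brho$-flat by (A1$'$), so $X$ is $\brho$-flat and hence so is $\varphi^t$. You instead re-derive all four claims from scratch via the suspension picture on $\cK\cong\cN$, which is a legitimate route and roughly in the spirit of \cite{HPT}. Your (1) is fine. Your (2) is right in outline, but note that the return time to the section $\tau=0$ blows up as $x\to\p\cD^2$ (since $\alpha\to 0$), so the integrability of the roof function and the control of time spent in the collar need to be verified before invoking the suspension dictionary for Lyapunov exponents. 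Your (3) you yourself flag as only an outline; the accessibility construction threading a pair of periodic orbits while avoiding $\cS_g$ and $\cU$ is the substantive content of Lemma 8 of \cite{HPT}, which Proposition~\ref{prop: gentle 0} of the present paper recalls in detail, and it cannot be replaced by a generic appeal to Pesin theory.

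One correction to your reading of (4): the surplus flatness $\rho=\brho^2$ for $V$ is not there to absorb Fa\`a di Bruno growth in passing from $X$ to $\varphi^t$; it is what makes condition (A3), $\alpha^{-1}V\to 0$ near $\p\cD^2$, achievable while keeping $\alpha\in C^\infty_{\brho}(\cD^2)$. The step you are actually trying to justify --- that $\brho$-flatness of the vector field $X$ yields $\brho$-flatness of its time-$t$ map --- is glossed over by the paper's parenthetical as well, and is not automatic for an arbitrary admissible sequence, since the $n$-th derivative of $\varphi^t-\id$ involves products of $\partial^{k}X$ with $k\le n$ that need not be dominated by $\brho_n$. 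A Gr\"onwall estimate gives the $C^1$ bound but not, without further input on the admissible sequence, the higher-order ones. Your instinct that some slack is needed is sound, but the slack encoded in $\brho^2$ serves a different purpose.
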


We remark that Statement (4) of Theorem~\ref{thm: HPT} is not proposed in~\cite{HPT},
and it is in fact due to Condition (A1$'$). 
Indeed, 
the flatness  only concerns the behavior near the boundary $\p\cN=\p \cD^2\times \cL$,
and Statement (3) of Proposition~\ref{prop: Katok isotopy} implies that
$\ds 
X(x, y, \tau)=(V(x), 0, \alpha(x))
$
for any $(x, y, \tau)\in \cU\times \cL$,
from which the $\brho$-flatness of $X$ follows.

\subsection{Further Properties of $\varphi^t$ and Its Gentle Perturbations}
\label{sec: gentle}

The notion of {\it pointwise partially hyperbolic diffeomorphisms}
was  introduced by Hu, Pesin and Talitskaya
in~\cite{HPT13}  (see also \cite{CHP13} and \cite{CHY20}).
From the construction of the special manifold $\cN$ in \eqref{def cN}
and  the vector field $X$ in \eqref{def X},
it is easy to verify that the time-$t$ map  $\varphi^t$  is
a pointwise partially hyperbolic diffeomorphism  on the open subset 
$\cN_0:=(\cD^2\backslash \cS_g)\times \cL\subset \cN$,
where $\cS_g$ is the singularity set of the Katok map $g$.
That is, there exists a $d\varphi^t$-invariant splitting 
\beq
\label{split PPH}
T_z\cN = E_X^s(z) \oplus E_X^c(z) \oplus E_X^u(z),
\ \ \text{for every} \ z\in \cN_0,
\eeq
and there are continuous functions
$\lambda(z)<\lambda'(z)\le 1\le \mu'(z)<\mu(z)$ defined for $z\in \cN_0$,
such that 
\beqn
\begin{split}
\|d\varphi^t(z) v\|\le \lambda(z)^t \|v\|,  \ \ & \ \  v\in E^s_X(z), \\
\lambda'(z)^t \|v\| \le 
\|d\varphi^t(z) v\|\le \mu'(z)^t \|v\|,  \ \ &  \ \ v\in E^c_X(z), \\
\mu(z)^t \|v\| \le \|d\varphi^t(z) v\| \hspace{1.9cm},  \ \ &  \ \ v\in E^u_X(z).
\end{split}
\eeqn
Here
$\dim E_X^s(z) =m'+1$, $\dim E_X^c(z) =1$,  and 
$\dim E_X^u(z) =m-m'-2$, where $m'=[(m-3)/2]$. 
In fact, 
$d\varphi^t$ acts isometrically on the flow central bundle $E_X^c$, 
i.e., $\lambda'(\cdot)=\mu'(\cdot)\equiv 1$.

Using the similar arguments as in Section 3 of \cite{DP}, 
one can actually show that 
there exist continuous invariant stable/unstable cone families for $\varphi^t$ 
in $\cN_0$, 
with uniform control on any compact subset of $\cN_0$. 
Furthermore, the following properties hold:

\begin{propertyx}
\label{P1}
$\varphi^t$  has strongly stable and unstable $(\delta, q)$-foliations
$W^s_X$ and $W^u_X$, where $\delta$ and $q$ 
are continuous functions on $\cN_0$ (see the precise definition in Section 2 of \cite{HPT});
\end{propertyx}

\begin{propertyx}
\label{P2}
the foliations $W^s_X$ sand $W^u_X$ are absolutely continuous;
\end{propertyx}

\begin{propertyx}
\label{P3}
$\varphi^t$ has negative Lyapunov exponents in the direction of $E^s_X$
and positive Lyapunov exponents in the direction of $E^u_X$ almost everywhere.
\end{propertyx}

In the proof of Statement (3) of Theorem~\ref{thm: HPT}, 
a key ingredient is the {\it essential accessibility} property of the time-$t$ map
$\varphi^t: \cN\to \cN$.
In fact, it was shown in Lemma 8 of~\cite{HPT}
that

\begin{propertyx}
\label{P4}
$\varphi^t: \cN_0\to \cN_0$ 
has the {\it accessibility} property via the foliations $W^s_X$  and $W^u_X$,
that is, any two points $z, z'\in \cN_0$ are {\it accessible}, i.e., 
there are points $z=z_0, z_1, \dots, z_{k-1}, z_k=z'$ in $\cN$,
such that $z_i\in W^*_X(z_{i-1})$ for $i=1, \dots, k$ and $*=s$ or $u$.
\end{propertyx}

In the next subsection, we shall construct a special {\it gentle perturbation} of $\varphi^{t}$.
Here `gentle' means that the perturbation only occurs in a 
domain $\Delta$ which is strictly inside $\cN_0$.
That is, we say $h: \cN\to \cN$ is a gentle perturbation of $\varphi^t$ on $\Delta$, if 
$h(z)= \varphi^t(z)$ for all $z\in \cN\backslash \Delta$. 
For our purpose, we also require that 
$\Delta$ is away from $\cU\times \cL$,
where $\cU$ is the neighborhood $\p\cD^2$ 
given by Proposition~\ref{prop: Katok isotopy}.

\begin{prop}
\label{prop: gentle 0}
For any $t\in (0, 1]$ and any
compact domain $\Delta$  inside 
$\Int\left(\cN_0\backslash (\cU\times \cL)\right)$,
there exists $\delta_{t, \Delta}>0$ such that 
if $h$ is  a gentle perturbation of $\varphi^t$ on $\Delta$,
which satisfies 
$\ds \|h-\varphi^t\|_{C^1} <\delta_{t, \Delta}$,
then
$h$ is a pointwise partially hyperbolic diffeomorphism on $\cN_0$ such that
Properties~\ref{P1}-~\ref{P4} hold for $h$.
\end{prop}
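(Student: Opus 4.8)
The plan is to treat this as a standard perturbation-stability result for partially hyperbolic systems, exploiting that the perturbation is confined to a compact set $\Delta$ strictly inside $\cN_0$ and strictly away from the boundary region $\cU\times\cL$. First I would fix $t\in(0,1]$ and the compact domain $\Delta$, and recall from the discussion preceding the statement that $\varphi^t$ is pointwise partially hyperbolic on $\cN_0$ with continuous invariant cone families and uniform hyperbolicity constants on every compact subset of $\cN_0$. Choose a slightly larger compact neighborhood $\Delta'$ with $\Delta\subset\Int(\Delta')\subset\Delta'\subset\Int(\cN_0\setminus(\cU\times\cL))$, and let $\lambda_0<\lambda_0'\le 1\le\mu_0'<\mu_0$ be uniform hyperbolicity bounds for $\varphi^t$ on $\Delta'$. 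The key observation is that cone invariance and the expansion/contraction estimates on $\Delta'$ are $C^1$-open conditions: there is $\delta_1>0$ so that any $h$ with $\|h-\varphi^t\|_{C^1}<\delta_1$ preserves (slightly widened) stable/unstable cone families on $\Delta'$ with hyperbolicity constants close to those of $\varphi^t$. Outside $\Delta$ we have $h=\varphi^t$ identically, so the cone conditions there are inherited verbatim. Taking $\delta_{t,\Delta}\le\delta_1$, standard graph-transform (Hadamard–Perron) arguments then produce the $d h$-invariant splitting $T_z\cN=E_h^s(z)\oplus E_h^c(z)\oplus E_h^u(z)$ on $\cN_0$ with the required dimensions, and $h$ is pointwise partially hyperbolic on $\cN_0$; the one-dimensional central bundle persists because $d\varphi^t$ was an isometry on $E_X^c$ and near-isometry is preserved under small $C^1$ perturbation, so $\lambda_h'(\cdot)$ and $\mu_h'(\cdot)$ stay bracketing $1$ up to a harmless reparametrization or we simply relax $\lambda'(z)<1<\mu'(z)$ into the partially hyperbolic inequalities as stated.

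Next I would verify Properties \ref{P1}--\ref{P4} for $h$ in turn. For \ref{P1}, the strong stable/unstable manifolds are constructed by the usual iteration of the cone families; the resulting $(\delta,q)$-foliations exist on $\cN_0$ because away from $\Delta$ the dynamics and hence the local manifolds agree with those of $\varphi^t$, while inside $\Delta'$ the local manifolds depend continuously (in the $C^1$ fine topology) on the diffeomorphism, so $\delta$ and $q$ remain continuous functions on $\cN_0$ (possibly after shrinking them slightly). For \ref{P2}, absolute continuity of $W^s_h$ and $W^u_h$ follows from the general absolute continuity theorem for (pointwise) partially hyperbolic systems with sufficiently regular invariant foliations — the holonomy maps are absolutely continuous on $\Delta'$ by the standard argument, and coincide with those of $\varphi^t$ outside $\Delta$. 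For \ref{P3}, negative exponents along $E_h^s$ and positive exponents along $E_h^u$ are forced by the uniform contraction/expansion in the persistent cone fields, which give definite bounds $\limsup\frac1n\log\|dh^n|_{E_h^s}\|<0$ etc. at every point of $\cN_0$, in particular $\vol_{\cN}$-a.e. Here one should note that $h$ need not be volume preserving a priori, but the conclusion of \ref{P3} as stated is pointwise-exponent in nature and does not need invariance of volume; the volume-preserving refinement is imposed later when the actual $h_{t\sigma}$ is built.

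The main obstacle is Property \ref{P4}, the (full, not merely essential) accessibility of $h$ via $W^s_h$ and $W^u_h$. Accessibility is \emph{not} in general a $C^1$-open property, so one cannot simply invoke openness; instead I would argue that the accessibility established in Lemma 8 of \cite{HPT} for $\varphi^t$ is \emph{robust} because the su-paths used there can be chosen to pass through regions where the perturbation is absent or harmless. Concretely: since $\Delta$ lies strictly inside $\cN_0$ and avoids $\cU\times\cL$, and since the unstable/stable leaves of $\varphi^t$ are (by the explicit product-like structure of $X$ on $\cD^2\times\cL$ and the transitivity of the toral automorphism $A$ together with the su-picture for the Katok map) \emph{topologically transverse and densely winding}, one shows that any two points can be joined by an su-path of $\varphi^t$ that meets $\Delta$ only in controlled, transverse fashion. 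Then a continuity/openness argument for su-paths under $C^1$-small perturbation — the leaves $W^s_h,W^u_h$ are $C^1$-close to $W^s_X,W^u_X$ on $\Delta'$ and identical elsewhere, so a transverse intersection of an unstable leaf with a stable leaf persists — upgrades this to an su-path for $h$ connecting the same two points. Carrying this out requires care about how the perturbed leaves behave as they re-enter and leave $\Delta$, and about uniformity of the transversality; this is the technical heart and where most of the work lies. Once accessibility is secured, collecting \ref{P1}--\ref{P4} completes the proof, with $\delta_{t,\Delta}$ taken to be the minimum of the finitely many thresholds produced in each step.
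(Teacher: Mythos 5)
Your handling of pointwise partial hyperbolicity and Properties~\ref{P1}--\ref{P3} matches the paper's: the paper likewise invokes the uniformity of cone families, distributions, and foliations of $\varphi^t$ on compact subsets of $\cN_0$, notes that these are $C^1$-stable, and refers the details to the analogous arguments in Section~5 of~\cite{DP}. That part is fine.

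The gap is in Property~\ref{P4}. You correctly identify accessibility as the non-trivial point and correctly sense that the way out is to choose $su$-paths that interact benignly with $\Delta$; but you leave precisely that step unresolved, wavering between two different claims (paths that ``pass through regions where the perturbation is absent'' versus paths that ``meet $\Delta$ only in controlled, transverse fashion'') and closing with an admission that ``this is the technical heart and where most of the work lies.'' The paper closes this gap with a concrete mechanism that you don't supply: it recalls the explicit skeleton of the accessibility proof for $\varphi^t$ from Lemma~8 of~\cite{HPT} --- a periodic point $p$ of the Katok map $g$ in $\cD^2\backslash\cU$, a periodic point $q$ of the toral automorphism $A$, the vertical segment $\Pi_{p,q}=\{(p,q,\tau')\}$, a second periodic point $p'\in\cU$ with $\alpha(p')<1$, and a specific four-legged $us$-path between levels of $\Pi_{p,q}$ obtained via these two periodic points --- and then uses the \emph{abundance of periodic orbits of $g$} to choose $p$ (and $p'$) so that $\Pi_{p,q}$ and the whole four-legged path lie away from $\Delta$ (note $p'\in\cU$ is automatically outside $\Delta$ since $\Delta$ avoids $\cU\times\cL$). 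With the skeleton disjoint from $\Delta$, continuity of the stable/unstable foliations under the gentle perturbation shows (AC1)--(AC3) persist for $h$, giving accessibility with no transversality-persistence analysis needed. Your proposal has the right intuition but does not identify this skeleton nor the freedom to relocate it, so the decisive step in the proof of~\ref{P4} is missing.
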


\begin{proof}
Utilizing the uniformity of invariant cone families, distributions and foliations
of $\varphi^t$ on any compact domain inside $\cN_0$,  
it is routine to show the pointwise partial hyperbolicity and 
Properties~\ref{P1}-~\ref{P3} for any gentle perturbation $h$
of $\varphi^t$ on $\Delta$, as long as $h$ is sufficiently close to $\varphi^t$
in the $C^1$ topology. 
We refer to Section 5 of~\cite{DP} for similar arguments,
and  leave the details to the readers.

It remains to show~\ref{P4} for $h$. To this end,
we recall the arguments in the proof of the accessibility 
for $\varphi^t: \cN_0\to\cN_0$ (see the proof of Lemma 8 in~\cite{HPT}, 
in which a different flow that is equivalent to $\varphi^t$ was discussed):
via the stable and unstable foliations of $\varphi^t$,
\begin{itemize}
\item[(AC1)] 
every point $z=(x, y, \tau)\in \cN_0$ is accessible to a point in 
$\ds 
\Pi_{p, q}:=\left\{(p, q, \tau'):  \tau'\in [0, 1]\right\},
$
where $p\in \cD^2$ is a periodic point of the Katok map $g$ in $\cD^2\backslash \cU$,
and $q\in \TT^{m-3}$ is a periodic point of the hyperbolic 
automorphism $A$ in Brin's construction;
\item[(AC2)] 
let $p'$ be another periodic point of the Katok map $g$ in $\cU$ such that $\alpha(p')<1$,
where $\alpha$ is the function on $\cD^2$ given below \eqref{def X}.
It follows that for any $z=(p, q, \tau)\in \Pi_{p, q}$, 
there is a four-legged $us$-path 
consisting of four curves moving inside  
$W^u_X(p, q, \tau)$,
$W^s_X(p', q, \tau_1)$,
$W^u_X(p', q, \tau_1)$,
$W^s_X(p, q, \tau_2)$
successively, 
such that $\tau_2<\tau_1<\tau$.
That is, 
any point $z=(p, q, \tau)$ is accessible to another point 
$z_2=(p, q, \tau_2)$ with $\tau_2<\tau$.
\item[(AC3)]
by shrinking the leg  length of the above $us$-path continuously, 
we conclude that any point $z=(p, q, \tau)\in \Pi_{p, q}$ 
is accessible to all the points 
$(p, q, \tau')$ with $\tau'\in [\tau_2, \tau]$.
It follows that any two points in $\Pi_{p, q}$ are accessible.
Together with (AC1), every point $z\in \cN_0$ is accessible to the point $(p, q, 0)$.
\end{itemize}
Now let $h$ be  a gentle perturbation of $\varphi^t$ on $\Delta$,
where $\Delta$ is a compact domain strictly inside 
$\Int\left(\cN_0\backslash (\cU\times \cL)\right)$.
Due to the abundance of periodic orbits for the Katok map $g$,
we may assume the periodic points $p$ and $p'$ are chosen away from $\Delta$,
and so are $\Pi_{p, q}$ and the four-legged $us$-path. 
By continuity of the stable/unstable foliations under gentle perturbations,
(AC1)(AC2)(AC3) would still hold for $h$,
if $h$ is sufficiently close to $\varphi^t$ in the $C^1$ topology.
That is, any $z\in \cN_0$ is accessible to $(p, q, 0)$
via the stable and unstable foliations of $h$,
and hence $h:\cN_0\to \cN_0$ has accessibility property,
i.e. Property~\ref{P4} holds for $h$.
\end{proof}

\subsection{Special Gentle Perturbation with Positive Central Lyapunov Exponent}
\label{sec: construct h}
 
Formula~\eqref{def X} indicates that the vector field $X$  
is independent of the $y$-components, and thus $d\varphi^t$
preserves any $A$-invariant vector bundle,
where $A$ is the hyperbolic  automorphism from Brin's construction. 
In particular, $d\varphi^t$ preserves the linear stable bundle $E^{s, m'}_{X}$
and linear unstable bundle $E^{u, m'}_X$
corresponding to the $m'$-th block matrix $A_{m'}$,
where $m'=[(m-3)/2]$.
Note that 
$\dim E^{s, m'}_{X}=1$,
and 
$\ds \dim E^{u, m'}_X
=\begin{cases}
1, &\text{if} \ m \ \text{is odd}, \\
2, &\text{if} \ m \ \text{is even}.
\end{cases}
$
In the latter case, 
there is a further $d\varphi^t$-invariant splitting
$E^{u, m'}_X=E^{u, m',1}_X\oplus E^{u, m',2}_{X}$,
as $A_{m'}$ has two distinct eigenvalues of modulus greater than $1$. 
Recall that $E^c_X$ is the central  bundle generated by the vector field $X$, which is 
a one-dimensional $C^\infty$ smooth $d\varphi^t$-invariant bundle.
We shall particularly work on the following 
two-dimensional $C^\infty$ smooth $\varphi^t$-invariant bundle:
\beq
\label{def cE}
\cE=\cE^u\oplus \cE^c,
\ \ \text{where} \ \
\cE^u :=
\begin{cases}
E^{u, m'}_X, & \text{if} \ m \ \text{is odd}, \\
E^{u, m', 1}_X, & \text{if} \ m \ \text{is even},
\end{cases}
\ \text{and} \ 
\cE^c:=E^c_X.
\eeq

The above observation
allows us to assign a smooth local coordinate system $\ds (\xi^u, \xi^c, \zeta)$
on a neighborhood centered at a point $z_0\in \cN_0$, 
where
\beq
\label{def Cart coord}
\langle\p/\p \xi^u\rangle=\cE^u, \
\langle\p/\p \xi^c\rangle=\cE^c, \
\zeta=
\begin{cases}
(x_1, x_2, y_1, \dots, y_{2(m'-1)}, \xi^s),  & \text{if} \ m \ \text{is odd}, \\
(x_1, x_2, y_1, \dots, y_{2(m'-1)}, \xi^s, \xi^{u, 2}),  & \text{if} \ m \ \text{is even}, 
\end{cases}
\eeq
in which 
$\ds (x_1, x_2)\in \cD^2, (y_1, \dots, y_{2(m'-1)})\in  \TT^{2(m'-1)}$,
$\ds \langle\p/\p \xi^s\rangle=E^{s, m'}_X$,
and 
$\ds
\langle\p/\p \xi^{u, 2}\rangle=E^{u, m', 2}_X
$
if $m$ is even.
We also need the cylindrical coordinate system 
$\ds
(\varrho, \theta, \zeta),
$
such that
$\ds
\xi^u=\varrho \cos\theta
$
and 
$\ds
\xi^c=\varrho \sin\theta.
$
For a sufficiently small $\gamma_0>0$,  we set
\beq
\label{def Delta}
\Delta=\Delta(z_0, \gamma_0)
:=\left\{ (\varrho, \theta, \zeta):  \
\varrho\in [0, \gamma_0],  \
\theta\in [0, 2\pi), \
\|\zeta\|\le \gamma_0\right\}, 
\eeq
where $\|\zeta\|$ denotes the Euclidean norm of $\zeta$. 

Given any $t\in (0, 1]$,
we introduce a special gentle perturbation of $\varphi^{t}$ as follows.
Due to the abundance of non-periodic orbits for $\varphi^t$,
we can choose $z_{t0}\in \cN_0$ and $\gamma_{t0}\in (0, 0.1)$
such that the   neighborhood $\Delta_t=\Delta(z_{t0}, \gamma_{t0})$ of the form~\eqref{def Delta}
satisfies the following property:
\beq
\label{def Delta t}
\begin{split}
& \varphi^{tj}(\Delta_t) \cap \Delta_t=\emptyset,   \ \text{for any} \ j\in [-N_t, N_t]\backslash\{0\}; \\
& \varphi^{tj}(\Delta_t) \cap \left(\left(\cS_g\cup \cU\right)\times \cL \right)=\emptyset,  \ \text{for any} \ j\in [-N_t, N_t],
\end{split}
\eeq
where $\cS_g$ is the singularity set of the Katok map,
$\cU$ is given by Proposition~\ref{prop: Katok isotopy},
and $N_t$ is a positive integer such that 
\beq
\label{choose Nt}
\eta^{tN_t}>100,
\eeq  
where
$\eta$ is the expansion rate of $A_{m'}$ along $\cE^u$.
Since the $\tau$-component
of the vector field $X$ given in \eqref{def X} 
is constantly one on $\ds \left( \cD^2\backslash \cU\right)\times \cL$,
we have 
\beq
\label{def expansion rate}
\|d\varphi^t(z)|\cE^u(z)\|=\eta^t,
\ \text{for any} \ z\in \varphi^{tj}(\Delta_t), \ \text{with} \ j\in [-N_t, N_t].
\eeq
We further choose two $C^\infty$ smooth functions
$\psi, \psi_1: \RR\to \RR$ such that
\begin{itemize}
\item  $\psi(w)>0$ for $0.1\gamma_{t0}< |w| <\gamma_{t0}$, and
$\psi(w)=0$ otherwise.
\item $\psi_1(w)>0$ for $|w|< \gamma_{t0}$, 
and $\psi_1(w)=0$ otherwise. Also, 
$\psi_1(w)$ is constant for 
$|w|< 0.5\gamma_{t0}$.
\end{itemize}
We then define a map $\phi_\sigma: \cN\to \cN$, with $\sigma\in [0, 1]$, by setting 
$\phi_\sigma=\id$ on $\cN\backslash \Delta_t$, 
and 
\beq
\label{def phi si}
\phi_\sigma(\varrho, \theta, \zeta) = (\varrho, \theta+\sigma 
\wpsi(\varrho, \zeta), \zeta ), 
\ \text{for any} \ (\varrho, \theta, \zeta)\in \Delta_t,
\eeq
where $\wpsi(\varrho, \zeta):=\psi(\varrho)\psi_1(\|\zeta\|)$.
Finally, we define
\beq
\label{def h t si}
h_{t\sigma}= \phi_\sigma \circ \varphi^t : \cN\to \cN.
\eeq
It is obvious that $\phi_\si$ and thus $h_{t\sigma}$ 
are volume preserving, i.e., they both preserve $\vol_\cN$.

\begin{prop}
\label{prop: property of h}
For any $\delta>0$ and any $r\in \NN$, 
there exists $(t, \sigma)\in (0, 1]^2$ such that 
the diffeomorphism 
$h_{t\si}:\cN\to \cN$ 
given in \eqref{def h t si}
satisfies the following properties:
\begin{enumerate}
\item 
$\ds
\|h_{t\sigma}-\id\|_{C^r}<\delta;
$
\item 
$h_{t\sigma}$ is a pointwise partially hyperbolic  
diffeomorphism on $\cN_0$; 
\item 
Properties~\ref{P1}-~\ref{P4} hold for $h_{t\sigma}$;
\item 
$h_{t\sigma}$ has positive average central Lyapunov exponent, that is,
\beq
\label{def neg central LE}
\int_\cN \log \left\|dh_{t\sigma}(z) \left|E^c_{t\sigma}(z) \right. \right\| d\vol_\cN(z)>0,
\eeq
where $E^c_{t\sigma}$ is the one-dimensional central bundle of $h_{t\sigma}$.
\end{enumerate} 
\end{prop}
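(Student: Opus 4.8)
The plan is to verify the four properties in order, treating $t$ and $\sigma$ as independent small parameters to be fixed at the end. First, for the $C^r$-closeness in (1): since $\varphi^t$ is the time-$t$ map of the flow generated by the $C^\infty$ vector field $X$, we have $\|\varphi^t-\id\|_{C^r}\to 0$ as $t\to 0^+$, uniformly on $\cN$, by standard estimates on flows (the $C^r$ norm of $\varphi^t-\id$ is $O(t)$ with a constant depending on $\|X\|_{C^r}$). For the perturbation factor, $\phi_\sigma$ is the identity outside the fixed compact set $\Delta_t$ and equals $(\varrho,\theta+\sigma\wpsi(\varrho,\zeta),\zeta)$ inside; since $\wpsi=\psi\cdot\psi_1$ is a fixed $C^\infty$ function (once $t$, hence $\gamma_{t0}$, is fixed), we get $\|\phi_\sigma-\id\|_{C^r}\le C(t)\,\sigma$ for a constant depending on $t$. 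Hence $\|h_{t\sigma}-\id\|_{C^r}=\|\phi_\sigma\circ\varphi^t-\id\|_{C^r}\le \|\phi_\sigma-\varphi^t\circ\phi_\sigma^{-1}\circ\cdots\|$; more cleanly, write $h_{t\sigma}-\id=(\phi_\sigma-\id)\circ\varphi^t+(\varphi^t-\id)$ and bound each term: first choose $t$ small so that $\|\varphi^t-\id\|_{C^r}<\delta/2$, then choose $\sigma$ small (depending on the now-fixed $t$) so that $\|\phi_\sigma-\id\|_{C^r}\|\varphi^t\|_{C^r}^{\,r}<\delta/2$ via the chain rule. This gives (1).

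For (2) and (3): once $t$ is fixed, $\Delta_t=\Delta(z_{t0},\gamma_{t0})$ is a compact domain strictly inside $\Int(\cN_0\backslash(\cU\times\cL))$ by the choice made in \eqref{def Delta t}, and $h_{t\sigma}=\phi_\sigma\circ\varphi^t$ is a gentle perturbation of $\varphi^t$ on $\Delta_t$ in the sense of Section~\ref{sec: gentle}, since $\phi_\sigma=\id$ off $\Delta_t$. Moreover $\|h_{t\sigma}-\varphi^t\|_{C^1}=\|(\phi_\sigma-\id)\circ\varphi^t\|_{C^1}\le C(t)\sigma\to 0$ as $\sigma\to 0$. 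So by Proposition~\ref{prop: gentle 0} applied with this $t$ and $\Delta=\Delta_t$, there is a threshold $\delta_{t,\Delta_t}>0$ such that whenever $\sigma$ is small enough that $\|h_{t\sigma}-\varphi^t\|_{C^1}<\delta_{t,\Delta_t}$, the map $h_{t\sigma}$ is pointwise partially hyperbolic on $\cN_0$ and Properties~\ref{P1}--\ref{P4} hold. We simply impose this additional smallness constraint on $\sigma$. This disposes of (2) and (3), and pins down the one-dimensional central bundle $E^c_{t\sigma}$ of $h_{t\sigma}$ referenced in (4).

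The heart of the matter is (4), the positivity of the average central exponent, and this is where the delicate choice of $\psi$, $\psi_1$, $\Delta_t$, and $N_t$ in \eqref{def Delta t}--\eqref{def expansion rate} enters. The idea is that $d\varphi^t$ acts isometrically on $E^c_X$ (the flow direction), so $\varphi^t$ itself has zero central exponent; the twist $\phi_\sigma$ rotates the plane $\cE=\cE^u\oplus\cE^c$ by an angle $\sigma\wpsi$, and since $d\varphi^t$ expands $\cE^u$ by the fixed factor $\eta^t>1$ (by \eqref{def expansion rate}) while fixing $\cE^c$, the combined cocycle $dh_{t\sigma}=d\phi_\sigma\circ d\varphi^t$ restricted to the plane $\cE$ is, in suitable coordinates, a product of a hyperbolic diagonal matrix $\mathrm{diag}(\eta^t,1)$ with a rotation — and such products generically have a strictly positive top Lyapunov exponent (this is the classical mechanism behind positive exponents for "rotation composed with shear"). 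The plan is: (a) identify the central bundle $E^c_{t\sigma}$ of $h_{t\sigma}$ as a small perturbation of $E^c_X$ inside the invariant plane field $\cE$, and reduce the integral in \eqref{def neg central LE} to an integral over the orbit segment through $\Delta_t$, using that $h_{t\sigma}=\varphi^t$ elsewhere and $\varphi^t$ contributes zero; (b) on the relevant finite collection of iterates $\varphi^{tj}(\Delta_t)$, $j\in[-N_t,N_t]$, which are disjoint and avoid the singular and boundary regions by \eqref{def Delta t}, write down the $2\times 2$ cocycle on $\cE$ explicitly — it is $\eta^t$ on the unstable line, $1$ on the central line, precomposed with the rotation by $\sigma\wpsi$ supported in the annulus $0.1\gamma_{t0}<\varrho<\gamma_{t0}$; (c) show that iterating this a bounded number $N_t$ of times, with $\eta^{tN_t}>100$ by \eqref{choose Nt}, forces the central direction to be rotated into the unstable cone and then expanded, producing a definite positive contribution $\int\log\|dh_{t\sigma}|E^c_{t\sigma}\|\,d\vol$ on the set of points whose forward orbit passes through the annulus where $\psi>0$; (d) bound the (zero or negligible) contribution elsewhere and conclude the total integral is positive for suitable $\sigma$. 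The main obstacle is step (c): making rigorous the claim that the twist-then-expand cocycle on $\cE$ has strictly positive average central exponent requires a careful Fubini argument over $\Delta_t$ in the cylindrical coordinates $(\varrho,\theta,\zeta)$, tracking how the one-dimensional central subspace $E^c_{t\sigma}(z)$ sits relative to the rotation angle $\theta$ at each point, and integrating $\log$ of the expansion along the orbit — essentially a Herman-type "rotation number / positive exponent" estimate localized to a single return, which must be shown to survive averaging against the full invariant volume on $\cN$ (not just on $\Delta_t$) and to dominate any sign-indefinite terms coming from the non-invariance of $E^c_{t\sigma}$ under a single step.
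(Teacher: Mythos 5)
Your treatment of items (1), (2), (3) matches the paper: fix $t$ small so $\|\varphi^t-\id\|_{C^r}<\delta/2$, then shrink $\sigma$ (depending on the now-fixed $t$) so that $\|h_{t\sigma}-\varphi^t\|_{C^r}<\min\{\delta/2,\delta_{t,\Delta_t}\}$ and invoke Proposition~\ref{prop: gentle 0}. No issues there.

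For (4), however, your plan addresses the wrong quantity and omits the mechanism that actually makes the argument work. You frame the goal as showing that the ``rotation composed with shear'' cocycle on $\cE$ has a strictly positive \emph{top} Lyapunov exponent. But the top exponent on $\cE$ is the exponent along $\cE^u_{t\sigma}$, and it is already positive (equal to $t\log\eta$) before any perturbation; making it positive establishes nothing. What is needed is positivity of the \emph{second} (central) exponent, i.e.\ the smallest exponent of $dh_{t\sigma}|\cE$. The crucial observation, which you never use, is that $\det(dh_{t\sigma}|\cE)=\det(d\varphi^t|\cE)$ because $\phi_\sigma$ is area-preserving on $\cE$; hence the sum of the two exponents on $\cE$ is unchanged, and the central exponent becomes positive \emph{if and only if} the unstable exponent $L_\sigma$ strictly \emph{decreases}. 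This sign reversal is the opposite of what a ``Herman-type positivity of the top exponent'' heuristic suggests. The paper (following Dolgopyat--Hu--Pesin and Shub--Wilkinson) proves $L_\sigma<L_0$ by a second-order Taylor expansion in $\sigma$: it shows $\partial_\sigma L_\sigma|_{\sigma=0}=0$ by the $\theta$-average $\int_0^{2\pi}\sin\theta\cos\theta\,d\theta=0$, and then shows $\partial^2_\sigma L_\sigma|_{\sigma=0}<0$ by splitting into $J_1+J_2$ and using $\eta^{tN_t}>100$ to control the tail $J_2$. Nothing in your steps (a)--(d) reproduces this; in particular you have no analogue of the first-order cancellation, without which the conclusion is not forced.

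There is also a smaller but genuine error in your step (a): you claim the integral $\int_\cN\log\|dh_{t\sigma}|E^c_{t\sigma}\|$ ``reduces to an integral over the orbit segment through $\Delta_t$, using that $h_{t\sigma}=\varphi^t$ elsewhere and $\varphi^t$ contributes zero.'' That would be valid if $E^c_{t\sigma}$ coincided with $E^c_X$ off $\Delta_t$, but the perturbed central bundle $E^c_{t\sigma}(z)\subset\cE(z)$ is determined by the full orbit of $z$ and will in general have a nonzero $\cE^u$-component even where $h_{t\sigma}=\varphi^t$; there $d\varphi^t$ expands that component, so the integrand is not zero off $\Delta_t$. The correct localization to $\Delta_t$ is via the determinant identity (pass to $L_\sigma$) and then the ergodicity plus the Kac formula for the first return map $H_{t\sigma}$, which is what the paper does. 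As written, your proof of (4) has no viable route to the estimate~\eqref{def neg central LE}.
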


\begin{proof}  
Fix a sufficiently small $t\in (0, 1]$ such that 
$\ds
\|\varphi^t-\id\|_{C^r}<\delta/2
$,
and
let $\Delta_t$ be the cylindrical neighborhood satisfying \eqref{def Delta t}. 
There exists $\sigma_{t0}\in (0, 1]$ such that  
$\ds
\|h_{t\sigma} - \varphi^t\|_{C^r}<\min\left\{ \delta/2, \delta(t, \Delta_t) \right\}
$
for any $\sigma\in (0, \sigma_{t0}]$,
where $\ds \delta(t, \Delta_t)$ is given by Proposition~\ref{prop:  gentle 0}. 
It follows that  
Statements (1)(2)(3) of Proposition~\ref{prop: property of h}
hold for $h_{t\sigma}$ 
with any $\sigma\in (0, \sigma_{t0}]$.
Furthermore, since $h_{t\si}$ is volume preserving, 
by the Birkhoff ergodic theorem and the Hopf argument,
Statement (3) immediately implies that  $h_{t\si}$ is ergodic with respect to $\vol_\cN$.
 
To show Statement (4),
we follow the arguments by Dolgopyat, Hu and Pesin in~\cite{DHP}, 
which is an elaboration of perturbation technique by
Shub and Wilkinson in~\cite{SW}. 
Due to our special construction of $h_{t\sigma}$ in \eqref{def h t si}, 
its tangent map $d h_{t\sigma}$ preserves $E$ and 
$\ds
\det\left( dh_{t\sigma}(z)|E(z)\right)
=\det\left( dh_{t0}(z)|E(z)\right)
=\det\left( d\varphi^t(z)|E(z)\right)
$ for any $z\in \cN_0$,
where $E$ is any $d\varphi^t$-invariant bundle  
containing the two-dimensional  smooth bundle
$\cE=\cE^u\oplus \cE^c$ given by \eqref{def cE} (including $\cE$ itself).
It follows that  
\eqref{def neg central LE} is equivalent to the inequality
\beq
\label{def L dec}
L_\si<L_0 \ \ \text{for some} \ \sigma\in (0, \sigma_{t0}],
\eeq
in which
\beqn
L_\si:=\int_{\cN} \log \left\|dh_{t\sigma}(z)\left| \cE^u_{t\sigma}(z) \right.\right\| d\vol_\cN(z),
\eeqn
where
$\ds
\cE^u_{t\sigma}
$
is the strong unstable bundle of $h_{t\si}$ inside $\cE$.
We denote by $H_{t\si}$ the first return map of $h_{t\si}$ on $\Delta_t$,
which is defined for $\vol_{\Delta_t}$-almost every $z\in \Delta_t$.
Due to the special form~\eqref{def h t si},
the first return time of $z\in \Delta_t$ under $h_{t\si}$ and $h_{t0}=\varphi^t$ 
are the same, and thus $H_{t\si}(z)=\phi_\si \circ H_{t0}(z)$.
It follows from the ergodicity of $h_{t\si}$ and the Kac formula that
\beq
\label{def L si}
L_\si
=\int_{\Delta_t} \log \eta(\si, z) \, d\vol_{\Delta_t}(z),
\ \text{where} \ \
\eta(\si, z):=\left\|dH_{t\sigma}(z)\left| \cE^u_{t\sigma}(z)\right. \right\|.
\eeq

To prove~\eqref{def L dec}, we proceed with similar
calculations as in  Section 0.5 of~\cite{DHP}.
Under the local coordinate system $(\xi^u, \xi^c, \zeta)$
on $\Delta_t$ given by \eqref{def Cart coord}, 
we assume that the one dimensional space 
$\ds
\cE^{u}_{t\sigma}(z)
$
is spanned by the vector $v(\si, z)=(1, \beta(\si, z), 0)^T\in \cE(z)$
for some continuous function $\beta(\si, \cdot)$ on $\Delta_t$.
Along the invariant bundle
$\cE=\cE^{u}\oplus \cE^c$,
the tangent maps $dH_{t0}$ and $d\phi_\si$ can be written in the matrix form
\beqn
dH_{t0}(z)|\cE(z)=\begin{pmatrix} \eta(z)  & 0 \\ 0 & 1 \end{pmatrix}
\ \ \text{and} \ \ 
d\phi_\si(z)|\cE(z)=\begin{pmatrix} A(\si, z)  & B(\si, z) \\ C(\si, z) & D(\si, z) \end{pmatrix}
\eeqn
for any $z\in \Delta_t$,
where
$\ds \eta(z):=\eta(0, z)=\|dH_{t0}(z)|\cE^u(z)\|$.
Then we get
\beqn
\begin{split}
dH_{t\si}(z)|\cE(z)
&=
d\phi_\si(H_{t0}(z))|\cE(H_{t0}(z))
\, dH_{t0}(z)|\cE(z)   =
\begin{pmatrix} 
\eta(z) A(\si, H_{t0}(z))  &   B(\si, H_{t0}(z)) \\ 
\eta(z) C(\si, H_{t0}(z))  &   D(\si, H_{t0}(z))
\end{pmatrix},
\end{split}
\eeqn
and the invariance equation 
$
\ds dH_{t\si}(z) v(\si, z)= 
\eta(\si, z) v(\si, H_{t\si}(z))
$
yields
\beq
\label{inv eq}
\begin{pmatrix} 
\eta(z) A(\si, H_{t0}(z))  &   B(\si, H_{t0}(z)) \\ 
\eta(z) C(\si, H_{t0}(z))  &   D(\si, H_{t0}(z))
\end{pmatrix}
\begin{pmatrix} 
1 \\ 
\beta(\si, z)
\end{pmatrix}
=
\eta(\si, z)
\begin{pmatrix} 
1 \\ 
\beta(\si, H_{t\si}(z))
\end{pmatrix}.
\eeq
Applying the arguments in the proof of Lemma 0.8 of \cite{DHP},
we obtain
\beqn
\log \eta(\si, z)
=\log \eta(z) - \log \left( D(\si, H_{t0}(z)) - B(\si, H_{t0}(z)) \beta(\si, H_{t\si}(z))\right),
\eeqn
and thus
\beq
L_\si=\int_{\Delta_t}\log \eta(z) d\vol_{\Delta_t}(z)
-\int_{\Delta_t} \log \left( D(\si, z) - B(\si, z) \beta(\si, \phi_\si(z))\right) d\vol_{\Delta_t}(z),
\eeq
where 
we switch $H_{t0}(z)$ to $z$ 
in the second integral, 
using that 
$H_{t\si}=\phi_\si \circ H_{t0}$
and that
$H_{t0}$ preserves $\vol_{\Delta_t}$.
Since
$\ds\beta(0, \cdot)= 0$,
$\ds
A(0, z)=D(0, z)=1,  
B(0, z)=C(0, z)=0,  
$
and 
$\ds
D'_\si(0, z)=\varrho \psi'(\varrho)\psi_1(\|\zeta\|) \sin\theta \cos\theta
$ (see Equation (0.9) of \cite{DHP}),
we obtain
\beq
\label{derivative 1}
\begin{split}
\left. \frac{d L_\si}{d \si}\right|_{\si=0}
&=-\int_{\Delta_t} D'_\si(0, z) d\vol_{\Delta_t}(z) \\
&= -\iint \varrho^2 \psi'(\varrho)\psi_1(\|\zeta\|)  d\varrho d\zeta  
\int_0^{2\pi} \sin\theta \cos\theta d\theta =0.
\end{split}
\eeq
Furthermore, a direct calculation shows that
\beq
\label{derivative 2}
\begin{split}
\left. \frac{d^2 L_\si}{d \si^2}\right|_{\si=0}
= &\int_{\Delta_t} \left[ \left(D'_\si(0, z)\right)^2 - D''_{\si\si}(0, z) \right] d\vol_{\Delta_t}(z) \\
&  + \int_{\Delta_t}  2 B'_\si(0, z) 
 \left. \frac{\p (\beta(\si, \phi_\si(z)))}{\p \si}\right|_{\si=0} d\vol_{\Delta_t}(z) 
\end{split}
\eeq
Again by the invariance equation~\eqref{inv eq}, with $z$ replaced by $H_{t0}^{-1}(z)$, we get
\beqn
\beta(\si, \phi_{\si}(z)) =
\frac{\eta(H_{t0}^{-1}(z)) C(\si,  z) + D(\si, z) \beta(\si, H_{t0}^{-1}(z))}
{\eta(H_{t0}^{-1}(z)) A(\si, z) + B(\si, z) \beta(\si, H_{t0}^{-1}(z))}.
\eeqn
Applying the arguments in the proof of Lemma 0.10 of \cite{DHP},
we obtain
\beqn
\left. \frac{\p (\beta(\si, \phi_{\si}(z)) )}{\p \si}\right|_{\si=0}
=C'_\si(0, z) + 
\sum_{n=1}^\infty \frac{C'_\si(0, H_{t0}^{-n}(z))}{\prod_{k=1}^n \eta(H_{t0}^{-k}(z))}.
\eeqn
Then we can rewrite \eqref{derivative 2} as
$\ds
\left. \frac{d^2 L_\si}{d \si^2}\right|_{\si=0} = J_1 + J_2,
$
where
\beqn
\begin{split}
J_1 := &\int_{\Delta_t} \left[ \left(D'_\si(0, z)\right)^2 - D''_{\si\si}(0, z) + 2 B'_\si(0, z)  C'_\si(0, z) \right] d\vol_{\Delta_t}(z), \\
J_2 := &  \sum_{n=1}^\infty \int_{\Delta_t}  
\frac{2B'_\si(0, z) C'_\si(0, H_{t0}^{-n}(z))}{\prod_{k=1}^n \eta(H_{t0}^{-k}(z))} d\vol_{\Delta_t}(z).
\end{split}
\eeqn
Note that by
\eqref{def Delta t}, \eqref{choose Nt} and \eqref{def expansion rate}, we have
$\ds
\prod_{k=1}^n \eta(H_{t0}^{-k}(z))\ge \eta^{nt N_t}>100^n
$
 for almost every $z\in \Delta_t$.
Following the computation from Equations (0.14)-(0.21) in \cite{DHP}, we get
\beqn
\begin{split}
J_1 &\le -(1-\gamma_{t0}) \int_{\Delta_t} \wpsi^2 d\vol_{\Delta_t} -\frac18  \int_{\Delta_t} \varrho^2 \wpsi_{\varrho}^2 d\vol_{\Delta_t} \\
J_2 & \le 4\left( \int_{\Delta_t} \wpsi^2 d\vol_{\Delta_t} +\int_{\Delta_t} \varrho^2 \wpsi_{\varrho}^2 d\vol_{\Delta_t} \right) 
\cdot \sum_{n=1}^\infty 100^{-n}.
\end{split}
\eeqn
Recall that $\gamma_{t0}<0.1$, we immediately get
\beq
\label{derivative 2'}
\left. \frac{d^2 L_\si}{d \si^2}\right|_{\si=0} = J_1 + J_2
<-0.025\left( \int_{\Delta_t} \wpsi^2 d\vol_{\Delta_t} +\int_{\Delta_t} \varrho^2 \wpsi_{\varrho}^2 d\vol_{\Delta_t} \right) <0.
\eeq
It then follows from \eqref{derivative 1} and \eqref{derivative 2'} that  
$\ds 
L_\si<L_0
$
for sufficiently small $\sigma\in (0, \sigma_{t0}]$,
that is, \eqref{def L dec} holds. 
The proof of this proposition is now complete.
\end{proof}

\begin{remark}
\label{rem: dim34}
We stress that Proposition~\ref{prop: property of h}
heavily relies on the two-dimensional $C^\infty$ smooth unstable-center
bundle $\cE$ of the form \eqref{def cE}, 
which does not exist when $\dim \cN=3$ or $4$.
This smooth bundle allows us to create positive average central Lyapunov exponent
under a small $C^r$ perturbation for an arbitrary $r\in \NN$.
It is worth pointing out that H\"older continuous bundle 
would work for $C^1$ perturbation, but not for $C^r$ perturbation with $r\ge 2$,
since an extra error term 
due to non-smoothness of the bundle might occur to deteriorate 
the estimation of average central Lyapunov exponent (see e.g. Lemma 4.3 in~\cite{DP}). 
\end{remark}

\subsection{Proof of Theorem~\ref{Thmb}}

We first recall a result
by Hu, Pesin and Talitskaya in \cite{HPT13}.

\begin{thm}[Hu-Pesin-Talitskaya~\cite{HPT13}, Theorem 2.3]
\label{thm: HPT1}
Let $f$ be a $C^2$ smooth volume preserving diffeomorphism which is
pointwise partially hyperbolic on an open set $\cO$ of a compact manifold. 
If Properties~\ref{P1}-~\ref{P4} hold for $f:\cO\to \cO$,
and 
\begin{propertyx}
\label{P5}
there is an invariant subset $\cO_1\subset \cO$ of positive volume 
such that the Lyapunov exponent 
of $f$ is positive
at any $z\in \cO_1$ and along any $v\in E^c_f(z)$.
\end{propertyx}

\noindent
Then $f$ is nonuniformly hyperbolic on $\cO$,
and $f|\cO$ is ergodic (in fact, Bernoulli).
\end{thm}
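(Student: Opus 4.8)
The plan is to run the Hopf argument, in the form adapted to pointwise partially hyperbolic systems, to get ergodicity, and then to feed this together with Properties~\ref{P3} and~\ref{P5} into Pesin's theory of hyperbolic measures in order to upgrade to nonuniform hyperbolicity and the Bernoulli property.

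First I would establish ergodicity of $f|\cO$ with respect to (the normalization of) $\vol|_\cO$. Let $\psi\in L^1(\cO,\vol)$ be $f$-invariant. By the Birkhoff ergodic theorem its forward and backward Birkhoff averages $\psi^+,\psi^-$ exist $\vol$-a.e., are $f$-invariant, and agree $\vol$-a.e.; call the common value $\bar\psi$. A standard uniform-continuity-along-leaves estimate, using only that $f$ uniformly contracts the leaves of $W^s_f$ forward and those of $W^u_f$ backward (Property~\ref{P1}), shows that $\psi^+$ is constant on each leaf of $W^s_f$ and $\psi^-$ on each leaf of $W^u_f$. Then the absolute continuity of these foliations and their holonomies (Property~\ref{P2}), together with Fubini, forces $\bar\psi$ to be essentially constant along $\vol$-a.e.\ strong stable leaf and along $\vol$-a.e.\ strong unstable leaf. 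Now I invoke accessibility (Property~\ref{P4}): any two points of $\cO$ are joined by a finite $us$-path, and propagating the exceptional null sets leg by leg along such a chain, using absolute continuity of the holonomies (the Brin--Pesin chain mechanism), makes $\bar\psi$, hence $\psi$, $\vol$-a.e.\ constant on $\cO$. So $f|\cO$ is ergodic. The same argument applies verbatim to every power $f^n$, which is again $C^2$, volume preserving and pointwise partially hyperbolic on $\cO$ with the same strong stable/unstable foliations $W^s_f,W^u_f$ and the same central bundle, so that Properties~\ref{P1}--\ref{P4} persist for $f^n$; hence $f^n|\cO$ is ergodic for all $n\ge1$, i.e. $f|\cO$ is totally ergodic.

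Next I would deduce nonuniform hyperbolicity and the Bernoulli property. Since $f|\cO$ is ergodic, the Oseledets exponents are $\vol$-a.e.\ constant on $\cO$; by Property~\ref{P3} the exponents along $E^s_f$ are negative and those along $E^u_f$ positive on a full-volume set, and by Property~\ref{P5} the exponent along $E^c_f$ is positive on the positive-volume set $\cO_1$, hence positive $\vol$-a.e.\ on $\cO$ by constancy. As $T\cO=E^s_f\oplus E^c_f\oplus E^u_f$, every Lyapunov exponent of $f$ is nonzero $\vol$-a.e.\ on $\cO$, so $f$ is nonuniformly hyperbolic on $\cO$ and $\vol|_\cO$ is a hyperbolic invariant measure. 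Pesin's theory of hyperbolic measures \cite{Pesin77, BP} then says that, modulo a $\vol$-null set, $\cO$ splits into at most countably many ergodic components, each of which decomposes into finitely many subsets cyclically permuted by $f$ on which a suitable power of $f$ is Bernoulli; ergodicity of $f|\cO$ leaves a single component, say $\cO=\cO_{(1)}\sqcup\cdots\sqcup\cO_{(k)}$ with $f$ permuting the $\cO_{(i)}$ cyclically and $f^{k}|\cO_{(i)}$ Bernoulli, and the ergodicity of $f^{k}|\cO$ obtained above forces $k=1$. Therefore $f|\cO$ is Bernoulli, which completes the proof.

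The main obstacle is the first step. The stable and unstable objects furnished by Properties~\ref{P1}--\ref{P2} are only local $(\delta,q)$-foliations, with non-uniform leaf sizes and holonomies and typically defined away from a singular set, so the uniform-continuity-along-leaves estimate, the Fubini/absolute-continuity argument, and especially the leg-by-leg propagation of null sets along $us$-paths have to be carried out with explicit control of the sizes of the neighborhoods and of the Jacobians of the holonomies along each leg of the accessibility chain. This is exactly the delicate bookkeeping performed in \cite{DP, HPT, HPT13, DHP, SW}, to which the detailed verification can be referred.
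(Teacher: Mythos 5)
This statement is quoted from Hu--Pesin--Talitskaya \cite{HPT13} (their Theorem 2.3) and the paper supplies no proof of it, so the comparison is between your sketch and the argument in the cited source. Your overall toolkit (Hopf argument, absolute continuity, accessibility, Pesin theory, total ergodicity to kill the cyclic factor in the Bernoulli upgrade) is the right one, but the architecture of your first step contains a genuine gap, and it is precisely the famous one. You claim to derive ergodicity of $f|\cO$ from (P1), (P2) and (P4) alone, by making $\psi^+$ constant along stable leaves, $\psi^-$ constant along unstable leaves, and then ``propagating the exceptional null sets leg by leg'' along a $us$-path. This is the original Brin--Pesin argument, and it does not close: $\psi^+$ and $\psi^-$ are only \emph{essentially} saturated (they agree only on a full-measure set), and after the first leg of an accessibility path you land at a \emph{specific} point dictated by the path, which may lie in the exceptional null set of the next holonomy. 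Absolute continuity controls ``almost every point on a transversal,'' not the particular junction points of a chain. Repairing this is not bookkeeping; it is the content of the Pugh--Shub/Burns--Wilkinson program, and in the absence of center bunching it is not known that accessibility plus absolute continuity imply ergodicity. Indeed, if your step 1 were correct as stated, Property (P5) would be irrelevant to the ergodicity conclusion, which cannot be right.

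The actual proof in \cite{HPT13} (and in \cite{DHP, DP}) uses (P3) and (P5) \emph{before} ergodicity, not after: the positivity of the central exponent on the positive-volume invariant set $\cO_1$, together with (P3), makes the volume a hyperbolic measure on $\cO_1$; Pesin's local ergodicity theorem then produces an ergodic component of positive volume that is essentially saturated by both strong foliations (this is where the hyperbolicity of the measure substitutes for center bunching in the Hopf argument); and only then do accessibility (P4) and absolute continuity (P2) force this essentially bi-saturated set to be conull in $\cO$. Ergodicity then yields a.e.\ constancy of the spectrum, hence nonuniform hyperbolicity on all of $\cO$, and the Bernoulli property follows from the Pesin/Ornstein--Weiss decomposition together with an argument ruling out a nontrivial cyclic factor (your total-ergodicity route, or the observation that the cyclic pieces would themselves be bi-saturated). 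So the fix is to reverse the logical order of your two steps and route the Hopf argument through Pesin theory rather than through the strong foliations alone.
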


Apparently, the positivity of average central Lyapunov exponent
(see ~\eqref{def neg central LE}) implies Property~\ref{P5}.
Let $h$ be a special gentle perturbation 
$h_{t\si}$, 
which is of the form~\eqref{def h t si}
for sufficiently small $t$ and $\si$,
then the following proposition is 
a direct consequence of 
Theorem~\ref{thm: HPT},
Proposition~\ref{prop: property of h}
and Theorem~\ref{thm: HPT1}.

\begin{prop}
\label{prop h final}
Let $m\ge 5$. 
For any $\delta>0$, any $r\in \NN$ and any admissible sequence $\brho$,
there exists a $C^\infty$ diffeomorphism $h: \cN\to \cN$ 
with the following properties:
\begin{enumerate} 
\item  
$h$ is volume preserving, i.e.,
$h$ preserves the volume $\vol_{\cN}$; 
\item  
$h$ is nonuniformly hyperbolic, i.e.,
$h$ has non-zero Lyapunov exponents at $\vol_{\cN}$-almost every point $z\in \cN$; 
\item $h$ is ergodic (in fact,  Bernoulli) with respect to $\vol_{\cN}$;
\item 
\begin{enumerate}
\item[(i)] 
$\|h-\id\|_{C^r}\le \delta$, i.e.,
$h$ is $\delta$-close to 
the identity in the $C^r$ topology;
\item[(ii)]  $h\in \Diff^\infty_{\brho}(\cN)$, i.e.,
$h$ is $\brho$-flat on $\cN$.
\end{enumerate}
\end{enumerate} 
\end{prop}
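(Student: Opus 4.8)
The plan is to obtain the proposition by packaging together three facts already established: the properties of the start-up diffeomorphism $\varphi^t$ (Theorem~\ref{thm: HPT}), the existence of a good gentle perturbation (Proposition~\ref{prop: property of h}), and the ergodicity/nonuniform-hyperbolicity criterion of Hu--Pesin--Talitskaya (Theorem~\ref{thm: HPT1}). First I would fix a function $\alpha\in C^\infty_{\brho}(\cD^2)$ satisfying Conditions (A1$'$), (A2), (A3) with respect to the given admissible sequence $\brho$ --- so that the underlying Katok map is chosen $\rho=\brho^2$-flat --- and with it the manifold $\cN$ of \eqref{def cN}, the vector field $X$ of \eqref{def X}, and the time-$t$ maps $\varphi^t$. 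Theorem~\ref{thm: HPT} then tells us that for each $t\in(0,1]$ the map $\varphi^t$ is volume preserving, is Bernoulli, has every Lyapunov exponent nonzero except the one along the flow direction, and belongs to $\Diff^\infty_{\brho}(\cN)$.

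Next, given $\delta>0$ and $r\in\NN$, I would apply Proposition~\ref{prop: property of h} to obtain a pair $(t,\sigma)\in(0,1]^2$ for which $h:=h_{t\sigma}=\phi_\sigma\circ\varphi^t$ (see \eqref{def h t si}) is pointwise partially hyperbolic on $\cN_0$, satisfies Properties~\ref{P1}--\ref{P4} there, obeys $\|h-\id\|_{C^r}<\delta$, and has positive average central Lyapunov exponent \eqref{def neg central LE}. Property (1) of the proposition is then immediate, since $\phi_\sigma$ and $\varphi^t$ are both volume preserving; and Property (4)(i) is read off directly. For Property (4)(ii) I would use that the cylindrical box $\Delta_t$ supporting $\phi_\sigma$ lies strictly inside $\Int(\cN_0\backslash(\cU\times\cL))$, so $\phi_\sigma$ is the identity on a whole neighborhood of $\p\cN=\p\cD^2\times\cL$; combining this with the remark following Theorem~\ref{thm: HPT} (namely that $X$ equals $(V(x),0,\alpha(x))$ on $\cU\times\cL$ and $\varphi^t$ is $C^\infty$-tangent to the identity near $\p\cN$), one checks that $h$ coincides with $\varphi^t$ near $\p\cN$, hence inherits $\brho$-flatness from Theorem~\ref{thm: HPT}(4), i.e.\ $h\in\Diff^\infty_{\brho}(\cN)$.

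It remains to deduce nonuniform hyperbolicity and the Bernoulli property, Properties (2) and (3), from Theorem~\ref{thm: HPT1} applied to $h$ on the open set $\cO:=\cN_0$. The hypotheses are in hand: $h$ is $C^\infty$ (in particular $C^2$) and volume preserving, it is pointwise partially hyperbolic on $\cN_0$, and Properties~\ref{P1}--\ref{P4} hold for $h|\cN_0$. For the remaining Property~\ref{P5}, I would invoke \eqref{def neg central LE}: since $h$ is volume preserving and ergodic (the latter following, as already noted inside the proof of Proposition~\ref{prop: property of h}, from Properties~\ref{P1}--\ref{P4} via the Birkhoff ergodic theorem and the Hopf argument), the central Lyapunov exponent of $h$ equals the positive constant $\int_\cN\log\|dh(z)|E^c_{t\sigma}(z)\|\,d\vol_\cN(z)$ at $\vol_\cN$-almost every point, so one may take $\cO_1\subset\cN_0$ to be an invariant set of full volume on which the central exponent is positive. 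Theorem~\ref{thm: HPT1} then gives that $h$ is nonuniformly hyperbolic on $\cN_0$ and that $h|\cN_0$ is Bernoulli.

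Finally I would globalize: because $\cN\backslash\cN_0=\cS_g\times\cL$ is $\vol_\cN$-null (the singularity set $\cS_g$ being the circle $\p\cD^2$ together with three fixed points, hence Lebesgue-null in $\cD^2$) and $\cN_0$ is $h$-invariant, "nonuniformly hyperbolic on $\cN_0$" means "nonuniformly hyperbolic at $\vol_\cN$-a.e.\ point of $\cN$" and "$h|\cN_0$ Bernoulli" upgrades to "$h$ Bernoulli on $\cN$", yielding Properties (2) and (3). I do not expect a serious obstacle here --- the statement is in essence a repackaging of the earlier results --- so the only points demanding care should be exactly these two bookkeeping ones: verifying that $\cN_0$ is both a full-measure and an $h$-invariant open set, so that the local conclusions of Theorem~\ref{thm: HPT1} transfer to all of $\cN$; and checking in Property (4)(ii) that the support $\Delta_t$ of the perturbation stays clear of an entire neighborhood of $\p\cN$, so that the delicate flatness of $\varphi^t$ near the boundary survives the perturbation intact.
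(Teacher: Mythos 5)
Your proposal is correct and follows essentially the same route as the paper: the paper also obtains Proposition~\ref{prop h final} directly by combining Theorem~\ref{thm: HPT}, Proposition~\ref{prop: property of h}, and Theorem~\ref{thm: HPT1}, with Property~\ref{P5} supplied by the positive average central Lyapunov exponent from \eqref{def neg central LE}. You have merely spelled out the bookkeeping steps (flatness surviving the gentle perturbation because $\Delta_t$ is disjoint from $\cU\times\cL$, and the transfer from $\cN_0$ to $\cN$ via $\vol_\cN(\cN\setminus\cN_0)=0$) that the paper leaves implicit in "a direct consequence."
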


Notice that 
Proposition~\ref{prop h final} is parallel to Theorem~\ref{Thmb},
with the only difference on the underlying manifolds. 
Nevertheless, it was shown by Brin~\cite{B}  that
$\cL$ can be embedded into $\RR^{m-1}\times \TT$ with 
trivial normal bundle,
where $\cL$ is the mapping torus  defined in \eqref{def cL}.
Therefore, there exists a continuous mapping 
$\wPsi=\wPsi_{\cN\to \cQ^m}$ from $\cN=\cD^2\times \cL$
to the $m$-dimensional cube $\cQ^m$, 
such that $\wPsi$ satisfies the following properties:
\begin{enumerate}
\item[(a$'$)] the restriction $\ds \wPsi|_{\Int(\cN)}$ is a $C^\infty$ diffeomorphic embedding;
\item[(b$'$)]  $\ds \wPsi(\cN)=\cQ^m$ and $\ds \vol_{\cQ^m}\left(\cQ^m\backslash \wPsi\left( \Int(\cN)\right)\right)=0$;
\item[(c$'$)]  $\ds \wPsi$ is volume preserving, i.e., $\wPsi_* \vol_{\cN}=\vol_{\cQ^m}$.
\end{enumerate}
Note that the above properties are similar to 
those in Statement (1) of Proposition~\ref{prop: Katok top}.
It follows that for any 
$\ve>0$, any $r\in \NN$ and any admissible sequence $\rho$,
there exist $\delta=\delta(\ve, r, \rho)>0$ 
and an admissible sequence $\brho=\brho(\ve, r, \rho)$,
such that if $h$ is 
a  diffeomorphism  obtained from Proposition~\ref{prop h final},
then the map $f=f_{h, \wPsi}$ defined by
\beqn
f(x)=f_{h, \wPsi}(x):=
\begin{cases}
\wPsi\circ h\circ \wPsi^{-1}(x), & \text{if}\  x\in \wPsi\left( \Int(\cN)\right) \\
x, & \text{otherwise}
\end{cases}
\eeqn
is a $C^\infty$ diffeomorphism of $\cQ^m$,
which satisfies 
all the properties listed in Theorem~\ref{Thmb}. 
The proof of Theorem~\ref{Thmb} is now complete.
 
\medskip

\bibliography{NHwCEbib}{}
\bibliographystyle{plain}

\end{document}